\documentclass[11pt]{amsart}
\usepackage[margin=1in]{geometry}
\usepackage{graphicx}
\usepackage{caption,subcaption}
\usepackage{xcolor}
\usepackage{comment}
\usepackage{float}
\usepackage{amssymb,amsmath}
\usepackage{epstopdf}
\usepackage{enumerate}
\usepackage[title]{appendix}

\usepackage[colorlinks=true, pdfstartview=FitV, urlcolor=blue]{hyperref}

\newcommand{\unif}{\mathrm{unif}}

\newtheorem{theorem}{Theorem}[section]

\newtheorem{lemma}[theorem]{Lemma}

\newtheorem{remark}[theorem]{Remark}

\title{Root Dynamics of Differentiated Polynomials with Rotationally Invariant Structure}
\author{Joseph Najnudel }
\author{Truong Vu}
\thanks{TV is supported in part through Institute for Pure and Applied Mathematics (IPAM) by the National Science Foundation (Grant No. DMS-1925919/2422832)}
\date{June 2025}

\begin{document}
\begin{abstract}
    The dynamics of polynomial roots under repeated differentiation has recently been conjectured to converge to a limiting measure governed by specific nonlinear PDEs, the conjectures being shown in some particular settings. For rotationally invariant initial distributions, a deterministic structured sampling model placing roots on concentric circles was recently introduced by Galligo, Najnudel, and Vu (2025) \cite{galligo2025dynamics}. In this paper, the authors proved convergence under the technical growth condition $m_n / (n \log n) \to \infty$, where $n$ is the number of circles and $m_n$ is the number of points per circle. In this paper, we significantly improve this result by relaxing the growth condition to $m_n / \log n \to \infty$, thus allowing for regimes where the number of points per circle grows proportionally to the number of circles. The key innovation is a refined upper bound on the root magnitudes after differentiation. This sharper estimate prevents the rapid accumulation of errors over multiple differentiations, fully validating a recent conjecture regarding the robustness of the sampling scheme.
\end{abstract}
\maketitle
\section{introduction}
The study of zero dynamics under repeated differentiation has emerged as a rich area of interaction between analysis, probability, and combinatorics. For a monic polynomial $P_N(z) = \prod_{k=1}^N (z - z_k)$ with roots $z_1,\ldots,z_N \in \mathbb{C}$, a fundamental question asks how the empirical distribution of the roots of the $k$-th derivative $P_N^{(k)}$ evolves as $N \to \infty$ and $k$ grows with $N$. When the roots are sampled independently from a fixed distribution $\mu_0$ supported on the real line, it is known that the roots of the $\lfloor tN\rfloor$-th derivative converge to a limiting measure $\mu_t$ depending only on $\mu_0$ and $t \in (0,1)$, characterized by the Steinerberger PDEs~\cite{alazard-lazar-nguyen,steinerberger2019nonlocal,kiselev-tan}.  We refer to \cite{galligo2024anti, galligo2025dynamics, michelen2022zeros, michelen2023almost, feng2019zeros, arizmendi2024finitefreecumulantsmultiplicative, kabluchko2021repeated,steinerberger2023free} and references therein for related developments. 

The two-dimensional analogue, where $\mu_0$ is rotationally invariant on $\mathbb{C}$, has attracted considerable attention. O'Rourke and Steinerberger~\cite{o2021nonlocal} conjectured a nonlocal PDE governing the evolution of the radial density, equivalent to the quantile relation
\begin{align*}
    \frac{\Psi_t^{(-1)}(x)}{x} = \frac{\Psi_0^{(-1)}(x+t)}{x+t}, \qquad 0 < x < 1-t,
\end{align*}
where $\Psi_t$ denotes the cumulative distribution function of the radial part of $(1-t)\mu_t$. This conjecture has been supported by formal arguments~\cite{campbell2024fractional} and proven for specific classes of random polynomials: Hoskins and Kabluchko~\cite{hoskins2020dynamics} established it for polynomials with independent coefficients, while Feng and Yao~\cite{feng2019zeros} obtained the first rigorous result in the radial regime for random polynomials with independent coefficients.

In a recent work, Galligo, Najnudel, and Vu~\cite{galligo2025dynamics} considered a deterministic structured sampling of a rotationally invariant measure $\mu_0$. For a sequence $(m_n)_{n \geq 1}$ of positive integers, they study polynomials of the form
\begin{align*}
    P_{n,m_n}(z) = \prod_{j=1}^n (z^{m_n} - (r_j^{(n)})^{m_n}) = \prod_{j=1}^n \prod_{k=0}^{m_n-1} (z - r_j^{(n)} e^{2\pi i k/m_n}),
\end{align*}
where $(r_j^{(n)})_{1\leq j\leq n}$ samples the radial part $\nu_0$ of $\mu_0$. This construction places roots on $n$ concentric circles, each containing $m_n$ equidistributed points, which approximates rotational invariance. Their main result (Theorem 4.1 in \cite{galligo2025dynamics}) establishes that under the growth condition
\begin{align*}
    \frac{m_n}{n\log n} \xrightarrow[n\to\infty]{} \infty,
\end{align*}
the empirical measure of the roots of the $\lfloor n m_n t\rfloor$-th derivative converges to $\mu_t = \nu_t \otimes \text{unif}$, where $\nu_t$ is given by the quantile relation above.

The condition $m_n/(n\log n) \to \infty$ arises from technical estimates controlling the propagation of errors through successive differentiations. Specifically, when consecutive radii are tightly clustered (i.e., the ratio $\max_j r^{(n)}_j/r^{(n)}_{j+1}$ is close to $1$), the bounds in Lemmas 4.3 and 4.4 of \cite{galligo2025dynamics} deteriorate. To circumvent this, the authors employ a regularization technique using minorant and majorant sequences scaled by successive powers of $\gamma_n = e^{3m_n^{-1}\log n}$, which forces $\alpha \leq \gamma_n^{-1}$. The requirement $m_n/(n\log n) \to \infty$ then ensures $\gamma_n^n \to 1$, preserving the empirical measure at first order. The authors conjecture that this condition can be relaxed, and that the conclusion should hold even when $m_n$ is of the same order as $n$, or under weaker assumptions.

In this paper, we improve the main result of \cite{galligo2025dynamics} by significantly weakening the growth condition. We prove that convergence to $\mu_t$ holds under the much milder assumption
\[
\frac{m_n}{\log n} \xrightarrow[n\to\infty]{} \infty,
\]
allowing for sequences where $m_n$ grows only slightly faster than $\log n$, and in particular covering the regime $m_n \sim c n$ for any $c>0$. The key innovation lies in sharpening the upper bound estimates for the roots after differentiation, which reduces the accumulation of error factors over multiple differentiation steps. By carefully propagating these improved estimates through $\ell m_n$ differentiations (Lemma~\ref{lem1.9}) for $1 \leq \ell \leq n-1$, we obtain upper bounds that remain controlled as soon as $m_n/\log n \to \infty$.

Indeed, the improved bounds allow us to relax the regularization parameter $\gamma_n$ to
\begin{align*}
    \gamma_n = \exp\left(\frac{\phi(n)\log n}{n m_n}\right),
\end{align*}
with $\phi(n) \to \infty$ arbitrarily slowly. This choice satisfies $\gamma_n^n = n^{\phi(n)/m_n} \to 1$ under the condition $m_n/\log n \to \infty$, whereas the original argument required $\gamma_n^n \to 1$ with the stronger factor $e^{3n\log n/m_n}$. The rest of the proof is structured as follows: we construct minorant and majorant sequences for 
the radii of roots of iterated derivatives 
which have controlled spacing, apply the monotonicity properties established in Section 2 of \cite{galligo2025dynamics} to preserve ordering under differentiation, and leverage the improved bounds to show that the empirical measure of the roots converges to the limit $\nu_t$. The quantile relation and the PDEs under additional regularity assumptions are derived. 

Our result confirms that the limiting measure $\mu_t$ is robust to the sampling scheme. It also validates the conjecture of \cite{galligo2025dynamics} that their theorem holds under weaker assumptions, including the case where $m_n$ has the same order of magnitude as $n$. The techniques developed here may be applicable to other problems involving the dynamics of polynomial roots under differentiation.

   \begin{figure}[H]
\begin{subfigure}{.5\linewidth}
 \centering
  \includegraphics[width=\linewidth]{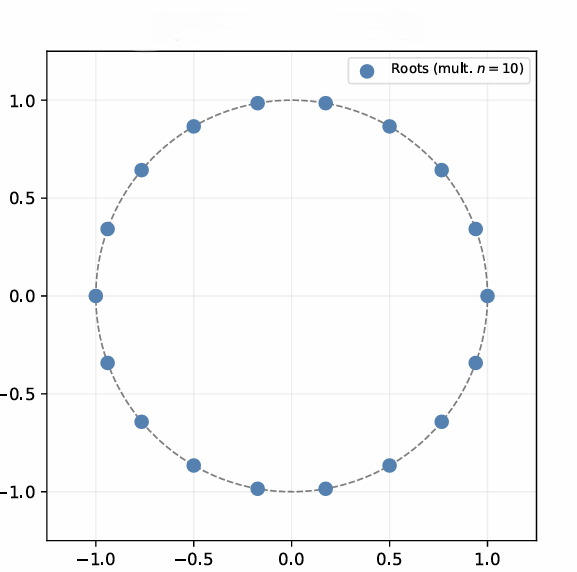}
  \caption{$t=0$ and $nm_n=180$}
  \label{}
\end{subfigure}
\begin{subfigure}{.5\linewidth}
 \centering
\includegraphics[width=\linewidth]{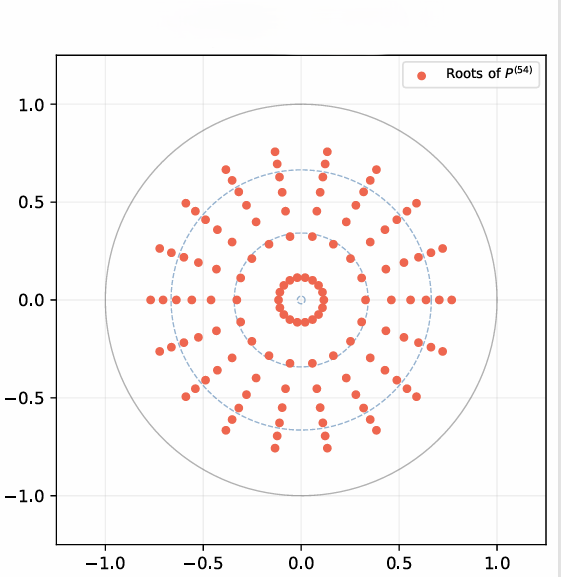}
    \caption{$t=0.3$ and  $\lfloor nm_nt\rfloor=54$ ($7$ radii, $126$ roots)}
    \label{fig:placeholder}
    \end{subfigure}
    \caption{Initial root configuration ($t=0$, degree $nm_n=180$, roots on circles with multiplicity $n=10$) alongside roots of $P^{(54)}$ at $t=0.3$ ($7$ radii, $126$ roots)} 

\end{figure}

   \begin{figure}[H]
\begin{subfigure}{.5\linewidth}
 \centering
  \includegraphics[width=\linewidth]{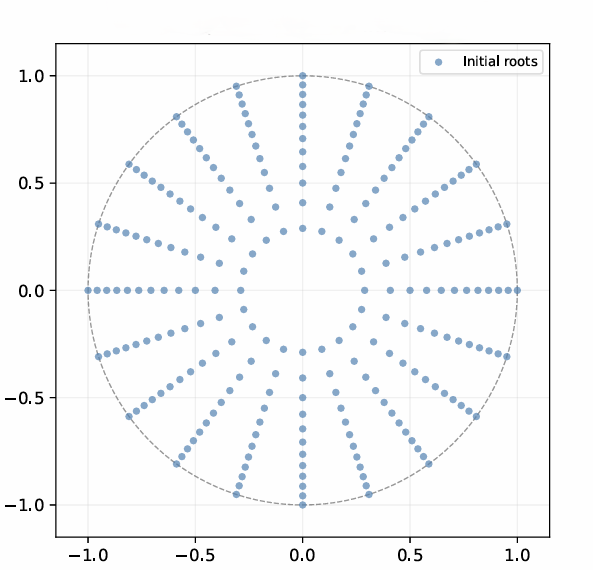}
  \caption{t=0 with degree $nm_n=240$}
  \label{}
\end{subfigure}
\begin{subfigure}{.5\linewidth}
 \centering
  \includegraphics[width=\linewidth]{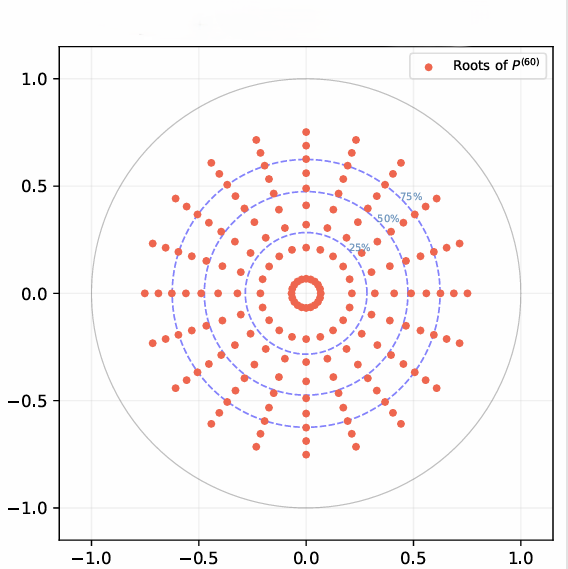}
    \caption{t=$0.25$ with $\lfloor nm_nt\rfloor=60$ and $9$ circles}
    \label{fig:placeholder}
    \end{subfigure}
    \caption{Initial root configuration ($t=0$, degree $nm_n=240$) alongside roots of $P^{(60)}$ at $t=0.25$ ($9$ circles)}
\end{figure}

  \begin{figure}[H]
\begin{subfigure}{.5\linewidth}
 \centering
  \includegraphics[width=\linewidth]{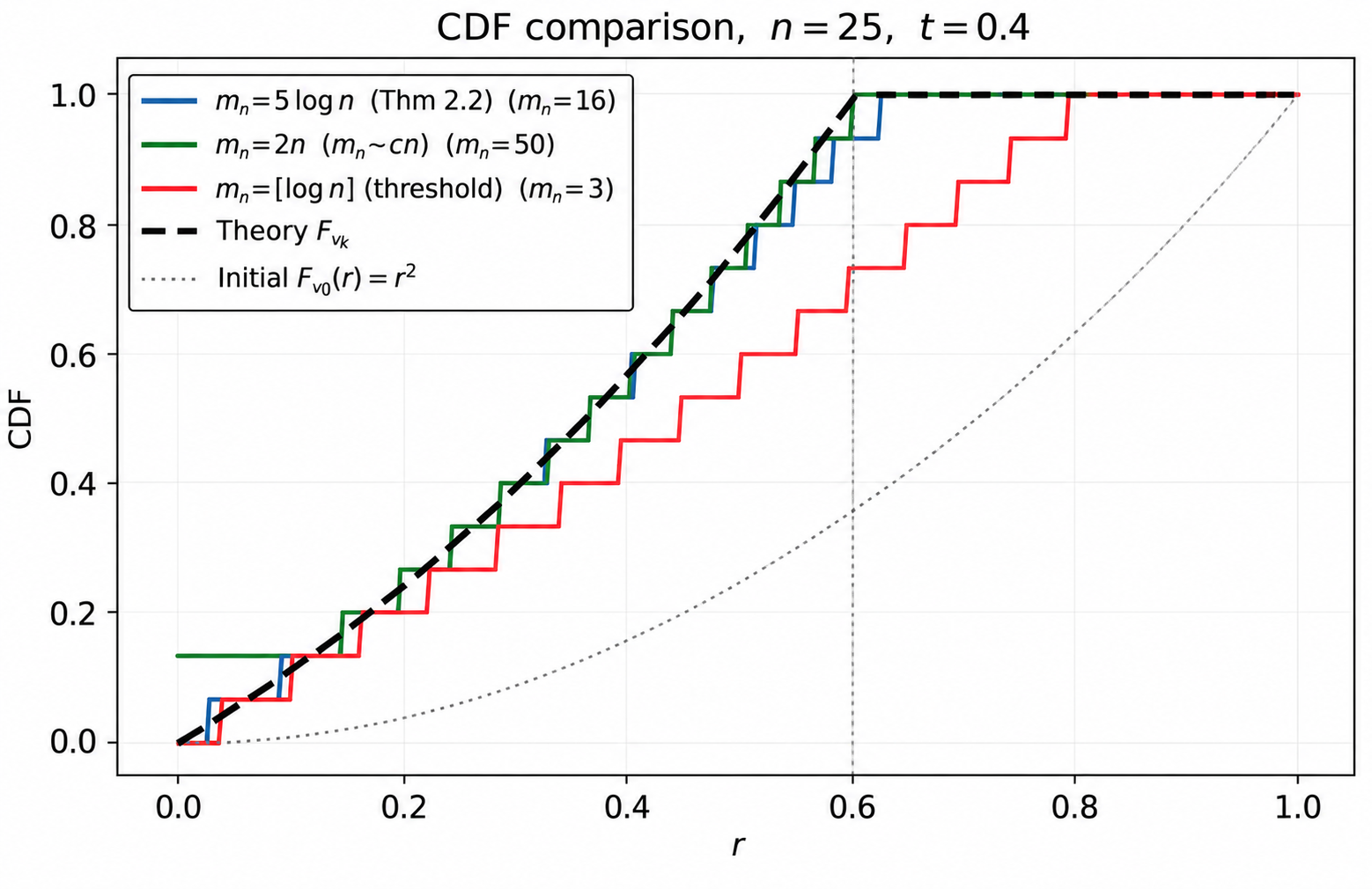}
  \caption{CDF comparison with $n=25$ and $t=0.4$}
  \label{}
\end{subfigure}
\begin{subfigure}{.5\linewidth}
 \centering
  \includegraphics[width=\linewidth]{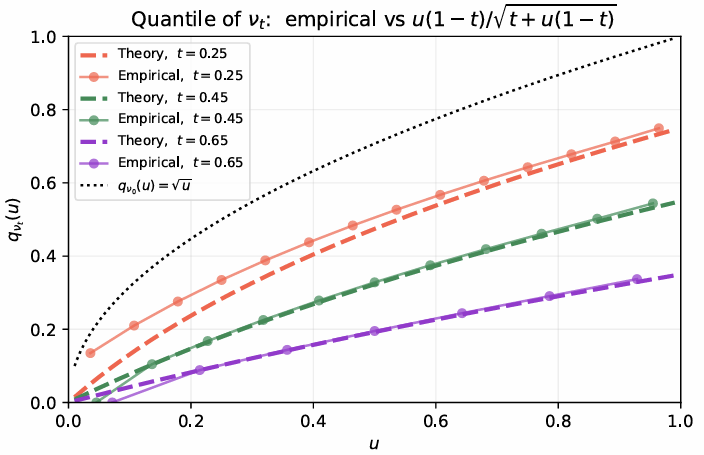}
    \caption{Quantile of $\nu_t$ with $n=20$ and $m_n=40$ }
    \label{fig:placeholder}
    \end{subfigure}
    \caption{}
\end{figure}

The paper is organized as follows. 
Section \ref{sec:sing-Step} contains the core technical innovation: we derive refined upper bounds for a single differentiation step, exploiting the specific structure of rational equations involved in order to obtain sharper estimates on the root modulii. In Section \ref{sec:iterated_diff}, we propagate these single-step bounds across multiple iterated differentiations, showing that the error accumulation remains controlled under our weakened growth conditions. Section \ref{sec:mainthm} presents the formal statement and proof of our main convergence result (Theorem \ref{thm:41improJ}), demonstrating the weak convergence of the empirical measure to $\mu_t$ and detailing the resulting quantile relations. Finally, Section \ref{sec:conclude} offers concluding remarks, summarizing the resolution of the robustness conjecture and discussing open avenues for future work.

\section{Refined Single-Step Estimates}\label{sec:sing-Step}
In this section, we establish tight bounds on the roots of the derivative of our structured polynomials. When differentiating the polynomial, the new roots are governed by a rational equation. To prevent the accumulation of errors over multiple differentiation steps, we must establish sharp bounds on $z_j$ relative to the initial radii $r_j$. The lower bound follows similar arguments to those in \cite{galligo2025dynamics}, but the upper bound requires a novel approach to be refined. In particular, we can improve the error bounds in Lemma 4.3 in \cite{galligo2025dynamics} by exploiting more carefully the particular structure of the rational sum providing the roots of polynomials of the form $S(z) = q Q(z) + m z Q'(z)$, 
for a suitable polynomial $Q$ and suitable integers $m$ and $q$. We can then propagate these bounds through iterated differentiations, modify Lemma 4.6 in \cite{galligo2025dynamics}, and prove a convergence result similar to Theorem 4.1 in \cite{galligo2025dynamics}.
\medskip
 
\begin{lemma}\label{impr_upper_bnd}
Let $Q$ be a polynomial of degree $n \geq 1$ with positive real roots, and let 
$(r_j)_{1 \leq j \leq n}$ be a strictly increasing sequence of positive real numbers. We assume that for $\alpha \in (0,1)$, 
$$ \max_{1 \leq j \leq n-1} (r_j/r_{j+1}) \leq \alpha$$
and we define
$$y := \frac{2 + \log_- (\log (\alpha^{-1}))}{\log (\alpha^{-1})},$$
where $\log_-$ denotes the negative part of $\log$, i.e. the maximum of $0$ and $- \log $. 
Suppose the $ j $-th smallest root of $ Q $ is at most $ r_j $ for $ 1 \leq j \leq n $. Then, for integers $m \geq 1$, $ 1 \leq q \leq m-1 $ and $ 1 \leq j \leq n $, all roots of the polynomial $S$ given by 
$$S(z) = q Q(z) + m z Q'(z)$$
are real and positive, and the $ j $-th smallest root of $S$ 
is at most  
\begin{align*}
\left(1 - \frac{\alpha}{  j+2 + 2y }\right) r_j.
\end{align*}  
For $ 2 \leq j \leq n $, the $ (j-1) $-th smallest root of $ Q' $, which has all real and positive roots, is at least  
\begin{align*}
\left(1 - \frac{\alpha}{  j+2 +2y }\right) r_j.
\end{align*}
\end{lemma}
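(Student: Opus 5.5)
Let $0<a_1\le\dots\le a_n$ denote the roots of $Q$, so $a_j\le r_j$. For $z$ not a root of $Q$,
\[
\frac{S(z)}{Q(z)} = q + m\sum_{k=1}^n \frac{z}{z-a_k} =: F(z).
\]
\emph{Step 1: reality and positivity of the roots of $S$.} The function $F$ is strictly decreasing on each interval between consecutive distinct roots of $Q$, since each term $z/(z-a_k)=1+a_k/(z-a_k)$ is decreasing there. It tends to $-\infty$ at $a_k^+$ and to $+\infty$ at $a_{k+1}^-$. On $(0,a_1)$ it runs from $F(0)=q>0$ down to $-\infty$. Repeated roots of $Q$ persist in $S$ with multiplicity one less. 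Counting intermediate-value roots gives exactly $n$ positive real roots $s_1\le\dots\le s_n$ with $s_1\in(0,a_1)$ and $s_j\in[a_{j-1},a_j]$. The roots of $Q'$ interlace similarly (Rolle), with the $(j-1)$-th root $b_{j-1}\in[a_{j-1},a_j]$.

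\emph{Step 2: reduction to $a_k=r_k$.} By the monotonicity results of Section 2 of \cite{galligo2025dynamics} (roots of $qQ+mzQ'$ are nondecreasing in each root of $Q$), it suffices for the upper bound on $s_j$ to treat the extremal case $a_k=r_k$ for all $k$. The $j$-th root $s_j$ is then the unique zero of $F$ in $(r_{j-1},r_j)$. So it suffices to show
\[
F(z_0)<0 \quad \text{at} \quad z_0=(1-\varepsilon)r_j, \qquad \varepsilon=\frac{\alpha}{j+2+2y},
\]
because $F$ is decreasing there and $F(s_j)=0$. We first check $z_0>r_{j-1}$, which holds since $r_{j-1}\le\alpha r_j$ and $\varepsilon\le 1-\alpha$ (compare the two).

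\emph{Step 3: bounding the terms of $F(z_0)$.}
\begin{itemize}
\item The $k=j$ term is $z_0/(z_0-r_j)=-(1-\varepsilon)/\varepsilon$.
\item For $k>j$ the terms are negative, and we discard them.
\item For $k<j$ we have $r_k\le\alpha^{j-k}r_j$, so
\[
\frac{z_0}{z_0-r_k}\le \frac{1}{1-\alpha^{j-k}/(1-\varepsilon)}.
\]
\end{itemize}
Summing over $k<j$ gives at most $(j-1)+\sum_{i\ge1}\frac{\alpha^i/(1-\varepsilon)}{1-\alpha^i/(1-\varepsilon)}$. The geometric-type tail is where $y$ enters. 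Splitting at $i\approx 1/\log(\alpha^{-1})$, the terms with $\alpha^i$ close to $1$ contribute roughly $\sum_i 1/(i\log\alpha^{-1})$, which is $O(\log_-\log\alpha^{-1}/\log\alpha^{-1})$ plus $O(1/\log\alpha^{-1})$. The remaining terms are geometric and contribute $O(1/\log\alpha^{-1})$. This should give a bound of $j-1+y$ up to constants absorbed into the "$2+$" and "$2y$". Since $q\le m-1<m$, dividing by $m$ shows it suffices that
\[
1+(j-1)+2y+1<\frac{1-\varepsilon}{\varepsilon},
\]
i.e. $\varepsilon(j+2+2y)<1$. This holds because $\varepsilon(j+2+2y)=\alpha<1$. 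The extra $\alpha$ in the numerator of $\varepsilon$ gives slack for the factor $1/(1-\varepsilon)$ and for the discarded terms.

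\emph{Step 4: the lower bound for $Q'$.} Here $b_{j-1}$ is the zero of $G(z)=\sum_k 1/(z-a_k)$ in $[a_{j-1},a_j]$. By monotonicity again it suffices to consider the extremal configuration giving the smallest $b_{j-1}$ under the spacing constraint, namely $a_k=\alpha^{j-k}r_j$ for $k\le j$ and $a_k\to\infty$ for $k>j$. We then show $G(z_0)>0$, i.e. $\sum_{k<j} z_0/(z_0-a_k)$ exceeds $(1-\varepsilon)/\varepsilon$. This is the same estimate with reversed inequalities, now using the lower bound $z_0/(z_0-a_k)\ge1$ for each of the $j-1$ terms. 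I expect this step to need care in matching the exact constants, since it uses $\varepsilon(j-1)$ versus $1-\varepsilon$.

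The main obstacle is the sharp summation in Step 3: bounding $\sum_{i\ge1}\alpha^i/(1-\varepsilon-\alpha^i)$ by $y$ (times a constant) uniformly for $\alpha$ near $1$. Here $1-\alpha^i$ is small for many $i$, and $\varepsilon$ must stay small relative to $1-\alpha$. I would handle this by comparing with $\int_1^\infty dx/(e^{x\log\alpha^{-1}}-1)$.
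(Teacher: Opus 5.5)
Step 4 fails: it sets out to prove a bound that is false. As printed, the clause on $Q'$ says ``at least''. But the hypothesis only gives $a_k\le r_k$, so shrinking all roots of $Q$ shrinks all roots of $Q'$. The claim fails even when $a_k=r_k$. For $n=j=2$ and $r_1=\alpha r_2$, the critical point is $\frac{1+\alpha}{2}r_2$, while $2y\ge 4/\log(\alpha^{-1})\ge 4\alpha/(1-\alpha)$ gives $\frac{\alpha}{4+2y}\le\frac{\alpha(1-\alpha)}{4}<\frac{1-\alpha}{2}$.

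Your reduction is also inconsistent with the hypotheses:
\begin{itemize}
\item there is no spacing constraint on the $a_k$;
\item the configuration $a_k=\alpha^{j-k}r_j$ ($k\le j$), $a_k\to\infty$ ($k>j$) violates $a_k\le r_k$;
\item by the monotonicity you invoke, sending $a_k\to\infty$ raises $b_{j-1}$ rather than minimizing it.
\end{itemize}
The inequality you would then need, $\sum_{k<j}z_0/(z_0-a_k)>(1-\varepsilon)/\varepsilon>j+1+2y$, is false, since the left side is at most $j-1+\frac43y$ (see below).

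The ``at least'' is a misprint for ``at most''. That is how the clause is used in Lemma~\ref{lemma:improved-m-steps}, where the first step $Q_1=mQ_0'$ produces the factor $1-\alpha^m/(j+3+2y_m)$. It is also why the paper calls the $Q'$ case analogous. The correct statement is your Step 3 with $q=0$. After reducing to $a_k=r_k$, the $(j-1)$-th root of $Q'$ is the zero in $(r_{j-1},r_j)$ of $\sum_k z/(z-r_k)$, i.e.\ of $F/m$ with $q=0$, and the same sign check at $z_0$ applies. You should have flagged the printed direction as impossible rather than tried to prove it.

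For $S$, your route differs from the paper's and works once one quantitative input is added. The paper keeps the terms $\ell=j-1$ and $\ell=j$ exact. It bounds the terms $\ell\le j-2$ relative to $z_j$ itself via $r_\ell\le\alpha^{j-1-\ell}z_j$, so the tail $\sum_p 1/(\alpha^{-p}-1)\le y$ carries no perturbation. It then replaces $z_j-r_{j-1}$ by $z_j-\alpha r_j$ and solves a quadratic in $z_j/r_j$.

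Your sign check at $z_0=(1-\varepsilon)r_j$ avoids the quadratic, but it puts the $k=j-1$ term into a tail that carries $\varepsilon$. Then ``$y$ times a constant'' is not enough. For $\alpha\to1$ with $j$ bounded, $(1-\varepsilon)/\varepsilon-j=2y+O(1+\log_-(\log\alpha^{-1}))$, so you need the tail to be at most $(2+o(1))y$.

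The missing input is $\varepsilon<\alpha\log(\alpha^{-1})/4\le(1-\alpha)/4$. This follows from $2y\ge4/\log(\alpha^{-1})$ and $\log(\alpha^{-1})\le\alpha^{-1}-1$. It gives $1-\varepsilon-\alpha^i\ge\frac34(1-\alpha^i)$, so by the integral comparison you mention the tail is at most $\frac43\sum_{i\ge1}\alpha^i/(1-\alpha^i)\le\frac43y$. Then $q/m+(j-1)+\frac43y<j+1+2y<(1-\varepsilon)/\varepsilon$ closes the argument, and also gives $z_0>r_{j-1}$.

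(In Step 1 the one-sided limits are swapped: $F\to+\infty$ at $a_k^+$ and $F\to-\infty$ at $a_{k+1}^-$, as monotone decrease requires. This slip is harmless.)
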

\begin{proof}
The fact that all roots of $S$ and $Q'$ are real and positive is proven in \cite{galligo2025dynamics}, Lemma 3.1. 
We give the proof for the upper bound on roots of $S$: the proof for $Q'$ is analogous. 

\medskip\noindent
By the monotonicity property proven in Section 2 of ~\cite{galligo2025dynamics}, we can assume that 
the $j$-th smallest root of $Q$ is exactly $r_j$.
In this case, by Lemma~3.1 in \cite{galligo2025dynamics} the roots $z_j$ of $S$ interlace with the roots $r_j$ of $Q$: 
\begin{align*}
0< z_1<r_1<z_2<r_2<\dots<z_n<r_n .
\end{align*}
The rational function  
\begin{align*}
R(z)=\frac{S(z)}{Q(z)}
      =q+m\,z\frac{Q'(z)}{Q(z)}
      =q+m\sum_{\ell=1}^{n}\frac{z}{z-r_{\ell}}
\end{align*}
satisfies $R(z_j)=0$ for every $j$.
Since $z_j \neq 0$, we deduce 
$$\frac{q}{z_j} + \sum_{\ell =1}^n 
\frac{m}{z_j - r_{\ell}} = 0$$
i.e. 
$$\frac{q}{z_j} + \sum_{1 \leq \ell \leq n, \ell \neq j-1, j}  
\frac{m}{z_j - r_{\ell}} 
+ \frac{m}{z_j - r_{j-1}} \mathbf{1}_{j \geq 2} + \frac{m}{z_j - r_j} =  0,$$
which implies 
$$\frac{q}{z_j} + \sum_{1 \leq \ell \leq   j -2 }  
\frac{m}{z_j - r_{\ell}} 
+   \frac{m}{z_j - r_{j-1}}  \mathbf{1}_{j \geq 2} + \frac{m}{z_j - r_j}  \geq 0.$$
For $\ell \leq j - 2$, $r_{\ell} 
\leq r_{j-1} \alpha^{j-1-\ell}
\leq z_j\alpha^{j-1-\ell}$, 
$z_j - r_{\ell}
\geq z_j(1 - \alpha^{j-1-\ell})$, 
$$\frac{m}{z_j - r_{\ell}}
\leq \frac{m}{ z_j (1 - \alpha^{j-1-\ell})}.$$
 Hence, 
$$\frac{1}{z_j} 
\left( q + \sum_{1 \leq \ell \leq j-2}
\frac{m}{1 - \alpha^{j-1-\ell}} \right)
+ \frac{m}{z_j - r_{j-1}}  \mathbf{1}_{j \geq 2} + \frac{m}{z_j - r_j}  \geq 0.$$
We have for $p \geq 1$, 
 \begin{align*}
\frac{1}{1-\alpha^{p}}
   =\frac{\alpha^{-p}}{\alpha^{-p}-1}
   =1+\frac{1}{\alpha^{-p}-1}.
\end{align*}
Hence, 
\begin{align*}
\sum_{1 \leq p \leq j-2}\frac{1}{1-\alpha^{p}}
  & =\max(0,j-2)+\sum_{1 \leq p \leq j-2}\frac{1}{\alpha^{-p}-1}\\
   &\le j-1+\sum_{p=1}^{\infty}\frac{1}{\alpha^{-p}-1}
\end{align*}
since $\frac{1}{\alpha^{-p}-1}>0$.

Now, the last infinite sum is bounded by 
$$\frac{1}{\alpha^{-1} - 1}
+ \int_{1}^{\infty} \frac{1}{ e^{ x \log (1/\alpha)} - 1} dx
= \frac{1}{\alpha^{-1} - 1}
+ \int_{ \log (1/\alpha)}^{\infty} \frac{dy}{\log (1/\alpha) (e^{y} - 1)} 
$$ $$\leq \frac{1}{\alpha^{-1} - 1} 
+ \frac{1} {\log (1/\alpha)} \left(\int_{\min(\log (1/\alpha), 1)}^1 \frac{dy}{y} + \int_1^{\infty} \frac{dy}{e^y/2} \right) 
$$ $$ \leq \frac{   2 + 
\log_- ( \log (\alpha^{-1}))  }{\log (\alpha^{-1})} = y$$
Hence, 
$$\frac{1}{z_j} 
\left( q + m (j-1+y)    \right)
+ \frac{m}{z_j - r_{j-1}} \mathbf{1}_{j \geq 2}+ \frac{m}{z_j - r_j}  \geq 0.$$
We know that $0< q < m$, and then, 
dividing by $m$, 
$$\frac{1}{z_j} 
 (j  + y) 
+ \frac{1}{z_j - r_{j-1}} \mathbf{1}_{j \geq 2}+ \frac{1}{z_j - r_j}  > 0.$$
If $j \geq 2$ and $z_j > r_j \alpha$, then 
$z_j - r_{j-1} > z_j - r_j \alpha \geq  0$
and 
$$\frac{1}{ z_j - r_{j-1}} \mathbf{1}_{j \geq 2} \leq 
\frac{1}{z_j - r_j \alpha}.$$
 This inequality is obviously also true for $j = 1$. 
We deduce 
$$\frac{j+y}{r_j \alpha} 
+ \frac{1}{z_j - r_j \alpha}+ \frac{1}{z_j - r_j}  > 0.$$
We get that $z_j/r_j$ is 
at most the root $x \in (\alpha,1)$
of the equation
$$
a + \frac{1}{x -   \alpha} + \frac{1}{x - 1}  = 0.$$
for 
$$a = \frac{j+ y}{  \alpha}.$$
Since the equation has no root larger than $1$ because the left-hand side is positive for $x > 1$, 
we have to take the largest root of the second degree equation
$$ a(x-\alpha)(x-1) + 2x-1- \alpha = 0.$$
This root is given by
$$ x = \frac{ a(1 + \alpha) -2 + \sqrt{4 + a^2 (1-\alpha)^2}}{ 2a}.$$
Hence, 
\begin{align*}
1 - x & =  \frac{ a(1 - \alpha) +2 - \sqrt{4 + a^2 (1-\alpha)^2}}{ 2a}
 \\& = \frac{ [a(1 - \alpha) +2]^2- (4 + a^2 (1-\alpha)^2) }{ 2a[a(1 - \alpha) +2 + \sqrt{4 + a^2 (1-\alpha)^2}]}
  \\ & = \frac{ 2 (1-\alpha)}{  a(1 - \alpha) +2 + \sqrt{4 + a^2 (1-\alpha)^2}}.
\end{align*}
Using the subadditivity of the square root, we deduce 
$$ 1 - x \geq \frac{2(1-\alpha)}{ 2 a (1-\alpha) + 4} = \frac{1}{a + 2/(1-\alpha)} = 
\left (  \frac{j}{  \alpha}
+  \frac{2 + \log_- (\log (\alpha^{-1}))}{\alpha \log (\alpha^{-1})} + \frac{2}{1-\alpha}
\right)^{-1}. 
$$
Moreover, 
$$\frac{1}{ \log (\alpha^{-1})} 
= \frac{1}{\sum_{r \geq 1} (1 - \alpha)^r/r}
\geq \frac{1}{\sum_{r \geq 1} (1 - \alpha)^r }
= \frac{1}{ (1- \alpha)/\alpha}
= \frac{\alpha}{1-\alpha} = \frac{1}{1- \alpha} - 1,
$$
$$\frac{2}{1 - \alpha} \leq \frac{2}{\log (\alpha^{-1})} + 2,$$
which gives, since $1 \leq 1/\alpha$, 
$$ 1 - x  \geq \left (  \frac{j+2}{  \alpha}
+  \frac{4 + \log_- (\log (\alpha^{-1}))}{\alpha \log (\alpha^{-1})}  
\right)^{-1} 
\geq \frac{\alpha}{j + 2 + 2y}.
$$
We then get 
$$z_j/r_j \leq  1- \frac{\alpha}{j + 2 + 2y} $$
under the assumption above that this ratio is larger than $\alpha$. This assumption can be discarded since the upper bound just above is itself larger than $\alpha$. 
Indeed, since $j \geq 1$,
$$j + 2 + 2y 
\geq 3 + \frac{4}{\log (\alpha^{-1})} 
\geq 3 + \frac{4}{1 - \alpha} - 4
= \frac{4}{1 -\alpha} - 1 \geq \frac{3}{1- \alpha},$$
and 
$$1 - \frac{\alpha}{j + 2 + 2y}
\geq 1 - \frac{\alpha(1 - \alpha)}{3}
> \alpha. 
$$

\end{proof}

The following result is almost a rewriting of \cite{galligo2025dynamics}, Lemma 4.4, so we do not prove it again. The only difference is that in the present article, $\alpha$ is at least
the maximum of $r_{j}/r_{j+1}$, but not necessarily exactly this maximum. 
This change is not an issue, since the lower bound on the roots is nonincreasing with respect to $y$, and the nonincreasing in $\alpha$. 

\begin{lemma}\label{lower_bnd}
Let $Q$ be a polynomial of degree $n \geq 1$ with positive real roots, and let 
$(r_j)_{1 \leq j \leq n}$ be a strictly increasing sequence of positive real numbers, such that 
$$ \max_{1 \leq j \leq n-1} (r_j/r_{j+1}) \leq \alpha.$$
Suppose the $ j $-th smallest root of $ Q $ is at least $ r_j $ for $ 1 \leq j \leq n $. Then, for integers $m \geq 1$, $ 1 \leq q \leq m-1 $ and $ 1 \leq j \leq n $, the $ j $-th smallest root of the polynomial $S$ given by 
$$S(z) = q Q(z) + m z Q'(z)$$
is at least
\begin{align*}
\left(1 - \frac{1}{ \max(1, j - 1 - y)}\right) r_j.
\end{align*}  
For $ 2 \leq j \leq n $, the $ (j-1) $-th smallest root of $ Q' $ is at least  
\begin{align*}
\left(1 - \frac{1}{ \max(1, j - 1 - y)}\right) r_j.
\end{align*}
\end{lemma}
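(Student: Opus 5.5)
We mirror the proof of Lemma~\ref{impr_upper_bnd}, now bounding the roots from below. By the monotonicity property of Section 2 of \cite{galligo2025dynamics} (roots of $S$ and of $Q'$ are nondecreasing in the roots of $Q$), we may assume the $j$-th smallest root of $Q$ is exactly $r_j$. By Lemma 3.1 of \cite{galligo2025dynamics}, the roots of $S$ then interlace as $0<z_1<r_1<z_2<\dots<z_n<r_n$. The bound is trivial whenever $\max(1,j-1-y)=1$, because $z_j>0$. So we assume $j-1-y>1$; in particular $j\ge 3$, and we have the interlacing bound $z_j>r_{j-1}$.

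The first step uses the equation $q/z_j+\sum_\ell m/(z_j-r_\ell)=0$. We keep the terms with $\ell\le j-1$, which are positive since $z_j>r_{j-1}\ge r_\ell$. We also keep the terms with $\ell\ge j$, which are negative. Bounding the negative side requires control of the $r_\ell$ for $\ell\ge j+1$. We have $r_\ell\ge r_j\alpha^{-(\ell-j)}$ and $z_j<r_j$, hence
\[
\frac{1}{r_\ell-z_j}\le \frac{1}{r_j(\alpha^{-(\ell-j)}-1)}.
\]
Summing over $p=\ell-j\ge1$ with the same integral comparison as in Lemma~\ref{impr_upper_bnd} bounds these contributions by $y/r_j$. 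The positive side is at least $(q/m+j-1)/z_j\ge (j-1)/z_j$, because each $1/(z_j-r_\ell)\ge 1/z_j$ for $\ell\le j-1$. Dividing by $m$, we obtain
\[
\frac{j-1}{z_j}\le \frac{1}{r_j-z_j}+\frac{y}{r_j}\le \frac{1}{r_j-z_j}+\frac{y}{z_j}.
\]
Hence $(j-1-y)(r_j-z_j)\le z_j$, which gives $z_j\ge r_j\,(j-1-y)/(j-y)$.

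The second step converts this to the stated form. Note that $(j-1-y)/(j-y)=1-1/(j-y)\ge 1-1/(j-1-y)$, which is the claimed bound $\bigl(1-1/\max(1,j-1-y)\bigr)r_j$. For $Q'$, the identical argument applies to $\sum_\ell 1/(w-r_\ell)=0$, where $w$ is the $(j-1)$-th root of $Q'$ and lies in $(r_{j-1},r_j)$. There is simply no $q/w$ term, and the positive terms with $\ell\le j-1$ again give at least $(j-1)/w$.

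The main obstacle is the tail bound for the $\ell\ge j+1$ terms. Since $y$ was defined through the sum $\sum_{p\ge1}1/(\alpha^{-p}-1)$, which Lemma~\ref{impr_upper_bnd} already shows is at most $y$, this step reduces to quoting that estimate. One point must be checked: $\alpha$ is only an upper bound for $\max r_j/r_{j+1}$, not the exact maximum. Since $p\mapsto\alpha^{-p}$ appears only through the inequality $r_\ell\ge r_j\alpha^{-(\ell-j)}$, which stays valid for any larger $\alpha$, this causes no difficulty. This is consistent with the paper's remark about monotonicity in $\alpha$ and $y$.
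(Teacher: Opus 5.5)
Your proof is correct, but it follows a different route from the paper, which does not reprove this lemma at all. The paper invokes Lemma 4.4 of \cite{galligo2025dynamics}, where $\alpha$ is exactly $\max_j r_j/r_{j+1}$. It then handles the case where $\alpha$ is only an upper bound by a monotonicity remark: $y$ is nondecreasing in $\alpha$, and the bound $1-1/\max(1,j-1-y)$ is nonincreasing in $y$.

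You instead give a self-contained argument that mirrors the proof of Lemma~\ref{impr_upper_bnd}. After the same monotonicity reduction and interlacing, you:
\begin{itemize}
\item bound the positive terms $\ell\le j-1$ from below by $(j-1)/z_j$;
\item keep the $\ell=j$ term exactly;
\item control the tail $\ell\ge j+1$ by the series estimate $\sum_{p\ge1}(\alpha^{-p}-1)^{-1}\le y$ already established there.
\end{itemize}
That estimate is valid: with $L=\log(\alpha^{-1})$ one has $(e^{L}-1)^{-1}\le L^{-1}$ and $1+2/e<2$.

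In your version $\alpha$ enters only through $r_\ell\ge r_j\alpha^{-(\ell-j)}$. So the ``upper bound only'' issue is automatic, and no separate monotonicity-in-$\alpha$ remark is needed. You also obtain the slightly sharper intermediate bound $z_j\ge (1-1/(j-y))\,r_j$ before weakening it to the stated form. The paper's route is shorter, but it relies on an argument from the earlier paper that is not reproduced here.
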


\section{Iterated Differentiations and Error Propagation}\label{sec:iterated_diff}

In the previous section, we established refined bounds for a single differentiation step. To analyze the root dynamics' macroscopic evolution, we must track these estimates across multiple differentiations. 
The following result provides bounds on the radii of the circles after a number of differentiations equal to the number of roots on each circle.

\begin{lemma}[Improved bounds after $m$ differentiations]\label{lemma:improved-m-steps}
For integers $m, n \geq 1$ and nondecreasing sequences of positive radii
$(r_j^{(n)})_{1 \leq j \leq n}$, 
we consider the polynomial $P_{n,m}(z)=\prod_{j=1}^{n}\bigl(z^{m}-(r_j^{(n)})^{m}\bigr)$. 
Then, the $m$-th derivative of $P_{n,m}$ can be written as 
$$P_{n,m}^{(m)} (z) = \frac{(nm)!}{((n-1)m)!} \prod_{j = 1}^{n-1} (z^m - (R_j^{(n)})^m) $$
for a nondecreasing sequence of  $(R_j^{(n)})_{1\le j\le n-1}$ of positive numbers. Let $(r_j)_{1 \leq j \leq n}$ be an increasing sequence of positive real numbers, and let $\alpha$ be larger than or equal to the maximum of $r_j/r_{j+1}$
for $1 \leq j \leq n-1$. If $r_j^{(n)} \leq r_j$ for $1 \leq j \leq n$, 
then for $1 \leq j \leq n-1$, 
$$ R_j^{(n)} \leq \left(1 - \frac{\alpha^m}{j + 3 + 2 y_m} \right)
r_{j+1}$$
where
\[
y_m=\frac{2+\log_{-}{(m\log(\alpha^{-1}))}}{m\log(\alpha^{-1})}.
\]
On the other hand, if $r_j^{(n)} \geq r_j$ for $1 \leq j \leq n$, then for $1 \leq j \leq n-1$, 
$$ R_j^{(n)} \geq \left(1 - \frac{1}{\max(1, j-1 -  y_m)} \right)
r_{j+1}.$$
 
\end{lemma}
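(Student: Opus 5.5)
The plan is to pass to the variable $w = z^m$ and reduce the $m$ successive differentiations to $m$ applications of Lemma~\ref{impr_upper_bnd} (and Lemma~\ref{lower_bnd} for the lower bound), acting on polynomials in $w$ whose roots are the $m$-th powers of the radii.

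\textbf{Step 1: structure of the iterated derivatives.} Write $P_{n,m}(z) = Q_0(z^m)$ with $Q_0(w)=\prod_j (w-(r_j^{(n)})^m)$. I claim by induction on $k$, for $0\le k\le m-1$, that
$$P_{n,m}^{(k)}(z) = z^{-k}\, Q_k(z^m),$$
where $Q_k$ has degree $n$ and positive real roots. Differentiating gives
$$\frac{d}{dz}\bigl[z^{-k} Q_k(z^m)\bigr] = z^{-k-1}\bigl(-k\,Q_k(w) + m\,w\,Q_k'(w)\bigr), \qquad w=z^m.$$
Then $P^{(k+1)}$ has the same form, with $Q_{k+1} = -kQ_k + mwQ_k'$. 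This is not literally the form $qQ+mwQ'$ with $q\ge1$.

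The cleaner normalisation is to factor out the right power of $z$. Since $P^{(k)}$ has a root of order $m-k$ at $0$ for $1\le k\le m-1$, write instead
$$P^{(k)}(z) = z^{m-k}\, \widetilde Q_k(z^m),$$
with $\deg \widetilde Q_k = n-1$ and $\widetilde Q_0(w) = Q_0(w)/w$ formally. Differentiating,
$$\frac{d}{dz}\bigl[z^{m-k}\widetilde Q_k(z^m)\bigr] = z^{m-k-1}\bigl((m-k)\widetilde Q_k(w) + m\,w\,\widetilde Q_k'(w)\bigr).$$
This has the form $qQ+mwQ'$ with $q=m-k\in[1,m-1]$ for $1\le k\le m-1$.

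The very first step ($k=0\to1$) is handled separately: $P'(z)=mz^{m-1}Q_0'(z^m)$. So $\widetilde Q_1 = mQ_0'$, which is covered by the $Q'$ part of Lemma~\ref{impr_upper_bnd}. This first step produces the index shift $j\to j-1$, i.e. the roots of $\widetilde Q_1$ are compared with $(r_{j+1})^m$.

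The remaining $m-1$ steps use $S=qQ+mwQ'$ with $q=m-1,\dots,1$. At $k=m$ the power of $z$ disappears, giving $\prod_j (z^m-(R_j^{(n)})^m)$ up to the stated constant, which is obtained by comparing leading coefficients. Real positivity of all roots, and hence the existence of $R_j^{(n)}$, comes from Lemma~\ref{impr_upper_bnd}.

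\textbf{Step 2: chaining the bounds.} In the $w$-variable the comparison radii are $\rho_j=r_j^m$, with consecutive ratio at most $\alpha^m$. Lemma~\ref{impr_upper_bnd} applied with $\alpha^m$ in place of $\alpha$ therefore uses
$$y = \frac{2+\log_-(m\log\alpha^{-1})}{m\log\alpha^{-1}} = y_m.$$

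The first ($Q'$) step needs the reverse direction. The $(j-1)$-th root of $Q'$ lies below the $j$-th root of $Q$, by interlacing, and is at most $\rho_j$. So after the first step the $j$-th root of $\widetilde Q_1$ is at most $\rho_{j+1}$. The stated bound for $Q'$ in Lemma~\ref{impr_upper_bnd} is a lower bound and will not be used here.

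Each subsequent $S$-step multiplies the $j$-th root bound by a factor at most $1$. The key point is to \emph{not} compound the factor $m-1$ times. Monotonicity only lets us keep the comparison sequence $\rho_{j+1}$ fixed: each step keeps the roots below $\rho_{j+1}$. I only need a single gain, obtained at the last step (or at any one step). Apply Lemma~\ref{impr_upper_bnd} with $j$ in the index set $1,\dots,n-1$ and comparison sequence $(\rho_{j+1})_j$, whose ratios are still at most $\alpha^m$. This gives the $j$-th root at most
$$\Bigl(1-\frac{\alpha^m}{j+2+2y_m}\Bigr)\rho_{j+1} \le \Bigl(1-\frac{\alpha^m}{j+3+2y_m}\Bigr)\rho_{j+1}.$$
Taking $m$-th roots, and using $(1-u)^{1/m}\le 1-u/m$, would lose a factor $m$.

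\textbf{Main obstacle.} The loss of a factor $m$ when taking $m$-th roots is the hard part. The remedy I would try is to accumulate the gain over all $m-1$ steps rather than use it once. At each step the current roots are at most $c_k\rho_{j+1}$. Applying Lemma~\ref{impr_upper_bnd} to the rescaled comparison sequence yields $c_{k+1}\le c_k\bigl(1-\alpha^m/(j+2+2y_m)\bigr)$, which is legitimate because scaling preserves ratios. After $m-1$ steps plus the interlacing step,
$$c\le \Bigl(1-\frac{\alpha^m}{j+2+2y_m}\Bigr)^{m-1},$$
and its $m$-th root is at most $1-\frac{\alpha^m}{j+3+2y_m}$ for $m\ge2$. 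This uses $(1-u)^{(m-1)/m}\le 1-\frac{m-1}{m}u$ and then $\frac{m-1}{m}\cdot\frac{1}{j+2+2y_m}\ge\frac{1}{j+3+2y_m}$ when $m-1\ge j+2+2y_m$. That last condition is not automatic, so I expect the precise bookkeeping (the "+3") to be the delicate point. The case $m=1$ also needs a direct treatment via interlacing.

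\textbf{Lower bound.} This is the same scheme with Lemma~\ref{lower_bnd}. The first step uses its $Q'$ statement, giving the $j$-th root at least $(1-1/\max(1,j-1-y_m))\rho_{j+1}$. The subsequent $S$-steps preserve lower bounds by monotonicity, since the roots of $S$ interlace above the preceding ones. Taking $m$-th roots, $(1-v)^{1/m}\ge 1-v$, gives the claim.
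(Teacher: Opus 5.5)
\textbf{There is a genuine gap in the upper bound.} It comes from throwing away the gain at the first differentiation. You read the $Q'$ clause of Lemma~\ref{impr_upper_bnd} as a lower bound and replace it by bare interlacing, so the step $\widetilde Q_1=mQ_0'$ only tells you the roots stay below $\rho_{j+1}$.

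That clause is meant as an upper bound; the ``at least'' is a misprint. The hypothesis only controls the roots of $Q$ from above, and the proof says the $Q'$ case is analogous to the bound for $S$. You can also check it directly. The $(j-1)$-th root $w\in(r_{j-1},r_j)$ of $Q'$ solves $\sum_{\ell}1/(w-r_\ell)=0$, which is the $q=0$ case of the equation treated in the proof. The same computation then gives $w\le\bigl(1-\frac{\alpha}{j+2+2y}\bigr)r_j$.

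Apply the index shift $j\mapsto j+1$ forced by the degree drop, and work in $w=z^m$ with $\alpha^m$ and $y_m$. This says the $j$-th root of $\widetilde Q_1$ is at most $\bigl(1-\frac{\alpha^m}{j+3+2y_m}\bigr)\rho_{j+1}$; this is exactly where the ``$+3$'' comes from. Each of the $m-1$ subsequent $S$-steps contributes a factor $1-\frac{\alpha^m}{j+2+2y_m}\le 1-\frac{\alpha^m}{j+3+2y_m}$. Hence $(R_j^{(n)})^m\le\bigl(1-\frac{\alpha^m}{j+3+2y_m}\bigr)^m r_{j+1}^m$, and the $m$-th root gives the claim with no loss for every $m\ge 1$. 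For $m=1$ the $Q'$ step is the whole proof.

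Without this first-step factor your chain only closes when $m\ge j+3+2y_m$ for all $j\le n-1$. That fails for $m=1$, and whenever $m<n+2+2y_m$. In particular it fails in the regime $m_n$ of order at most $n$, with $y_{m_n}$ of order $n/\phi(n)$, which is exactly what Lemma~\ref{lem1.9} needs.

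\textbf{Your lower-bound sketch also contains an error.} The roots of $S=qQ+mwQ'$ lie \emph{below} those of $Q$: the interlacing is $0<z_1<r_1<z_2<\dots<z_n<r_n$. So the $S$-steps push roots down, and a lower bound obtained at the first step is not preserved by monotonicity alone.

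You have to apply Lemma~\ref{lower_bnd} at every one of the $m$ steps. The factors $1-1/\max(1,j-1-y_m)$ are nondecreasing in $j$, so the ratio condition survives; where a factor vanishes, the bound is trivial. The reindexed $Q'$ step gives $1-1/\max(1,j-y_m)$, which is at least this. The product of $m$ factors, each at least $1-1/\max(1,j-1-y_m)$, has $m$-th root at least that quantity, so here taking the $m$-th root costs nothing. The paper itself does not reprove this half and refers to the earlier work.

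\textbf{A smaller point in your upper-bound chaining.} The comparison values $c_{k,j}\rho_{j+1}$ are not a uniform rescaling, since $c_{k,j}$ depends on $j$. So ``scaling preserves ratios'' is not the right justification. What keeps consecutive ratios at most $\alpha^m$ is that these factors increase with $j$.
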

\begin{proof}
The lower bound has been already proven in \cite{galligo2025dynamics}, Lemma 4.5, with the only difference that $\alpha$ is not necessarily 
exactly the maximum of $r_{j}/r_{j+1}$, which is not an issue since the lower bound is nonincreasing in $\alpha$.

Let us prove the upper bound.

We have 
$$P_{n,m}(z) = Q_0(z^m)$$
where 
$$Q_0(z) = \prod_{j=1}^n (z - (r_j^{(n)})^m).$$
By induction, for $1 \leq k \leq m$, 
$$P_{n,m}^{(k)} = z^{m-k} Q_k (z^m),$$
where the polynomials $(Q_k)_{1 \leq k \leq m}$ have degree $n-1$ and satisfy $Q_1 = m Q_0'$, and 
$$Q_{k+1} (z) = (m-k) Q_k(z) + m z Q_k'(z). $$
for $1 \leq k \leq m-1$. 
 By Lemma 3.1. of \cite{galligo2025dynamics} and induction, all polynomials $(Q_k)_{1 \leq k \leq m}$ have real and positive roots, which provides the general form of the factorization of the polynomial $P_{n,m}^{(m)}$. 

Let us now assume $r_j^{(n)} \leq r_j$ for $1 \leq j \leq n$: in this case, the $j$-th smallest root 
of $Q_0$ is bounded by $r_j^m$ for $1 \leq j \leq n$.  
Iterating $m$ times Lemma \ref{impr_upper_bnd},
we deduce, for $1 \leq j \leq n-1$, 
successive upper bounds 
on the $j$-th smallest root 
of $Q_1, Q_2, \dots, Q_m$, from the 
fact that the $(j+1)$-th smallest root  
of $Q_0$ is at most $r_{j+1}^m$. 
More precisely, we get by induction, that
for $1 \leq k \leq m$, the $j$-th smallest root 
of $Q_k$ is at most 
$$ \left(1 - \frac{\alpha^m}{j +3 + 2 y_m} \right)
\left(1 - \frac{\alpha^m}{j + 2 + 2 y_m} \right)^{k-1} r^m_{j+1}. 
$$
Notice that in this induction, we use the fact that the ratio between these upper bounds for consecutive values of $j$ always remains bounded 
by $\alpha^m$, which is true because 
$r_j^m /r_{j+1}^m \leq \alpha^m$ by assumption, 
and the factors in front of $r_{j+1}^m$ are increasing in $j$.  
 
Since the $j$-th smallest root of $Q_m$ is
$(R_j^{(n)})^m$, we have 
$$ (R_j^{(n)})^m
\leq  \left(1 - \frac{\alpha^m}{j +3 + 2 y_m} \right)
\left(1 - \frac{\alpha^m}{j + 2 + 2 y_m} \right)^{m-1} r_{j+1}^m
\leq \left(1 - \frac{\alpha^m}{j +3 + 2 y_m} \right)^m r_{j+1}^m$$
 
which proves the claimed upper bound.  
\end{proof}

\begin{lemma}\label{lem1.9}
   Let $(m_n)_{n \geq 1}$ be a sequence of positive integers, such that 
   $m_n/\log n$ tends to infinity when $n \rightarrow \infty$.
   We keep the notation of Lemma \ref{lemma:improved-m-steps}. Then, for all integers $n \geq 2$, $1 \leq \ell \leq n-1$, 
$0 \leq q \leq m_n-1$, the roots of the $(\ell m_n - q)$-th derivative of $P_{n,m_n}$ are
\begin{itemize}
    \item $0$ with multiplicity $q$;
    \item $s_j e^{2\pi i k / m_n}$ for $1 \le j \le n-\ell$ and $0 \le k \le m_n-1$,
\end{itemize}
where the moduli $s_j$ satisfy, for $1 \le j \le n-\ell$,
\[
r_{j+\ell}^{(n)} e^{-\varepsilon_n} \frac{j-n \varepsilon_n}{j+\ell- n \varepsilon_n} \mathbf{1}_{j \geq n \varepsilon_n} \;\le\; s_j \;\le\;  r_{j+\ell}^{(n)} e^{\varepsilon_n}  \Bigl( \frac{j+ 1+  n \varepsilon_n}{j+\ell+ n \varepsilon_n } \Bigr)^{e^{- \varepsilon_n}},
\]
 where $(\varepsilon_n)_{n \geq 2}$ is a positive sequence depending only on the sequence $(m_n)_{n \geq 1}$ and tending to zero at infinity. 
\end{lemma}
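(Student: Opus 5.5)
We plan to reduce to Lemma~\ref{lemma:improved-m-steps} (upper bound) and its lower-bound counterpart, applied $\ell$ times in succession, after replacing the radii $(r_j^{(n)})$ by regularized majorant and minorant sequences. Fix $\phi(n)\to\infty$ slowly enough that $\phi(n)\log n/m_n\to 0$ and $\phi(n)\log n/n\to 0$; for instance $\phi(n)=\min\bigl(\sqrt{m_n/\log n},\sqrt{n/\log n}\bigr)$ is allowed because $m_n/\log n\to\infty$. Put $\gamma_n=\exp(\phi(n)\log n/(nm_n))$ and $\alpha=\gamma_n^{-1}$. Define the majorant $r_j^+=r_j^{(n)}\gamma_n^{j}$ and the minorant $r_j^-=r_j^{(n)}\gamma_n^{j-n}$. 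Both are strictly increasing with consecutive ratios at most $\alpha$, and they satisfy $r^-_j\le r^{(n)}_j\le r^+_j$. Moreover $r^{+}_j\le r_j^{(n)}\gamma_n^n$ and $r^-_j\ge r_j^{(n)}\gamma_n^{-n}$, with $\gamma_n^n=n^{\phi/m_n}\to1$. By the monotonicity property of Section~2 of \cite{galligo2025dynamics}, it suffices to bound the roots for the polynomials built on $r^\pm$.

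Next we evaluate the parameters. Since $m_n\log(\alpha^{-1})=\phi\log n/n\to0$, we get $\alpha^{m_n}=n^{-\phi/n}\ge e^{-\varepsilon_n}$ and
\[
y_{m_n}=\frac{2+\log\bigl(n/(\phi\log n)\bigr)}{\phi\log n/n}\le \frac{2n}{\phi}
\]
for $n$ large. We therefore define $\varepsilon_n$ as the maximum of the quantities $4/\phi$, $\phi\log n/n$, $\phi\log n/m_n$ and $3/n$, up to constants, so that $\varepsilon_n\to0$, $2y_{m_n}+3\le n\varepsilon_n$, $\alpha^{m_n}\ge e^{-\varepsilon_n}$ and $\gamma_n^{2n}\le e^{\varepsilon_n}$.

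We then iterate Lemma~\ref{lemma:improved-m-steps}, applying it to the block derivatives $P^{(m_n)},P^{(2m_n)},\dots$. By induction on $k\le\ell$, the $j$-th radius after $km_n$ differentiations is at most
\[
r^+_{j+k}\prod_{i=1}^{k}\Bigl(1-\frac{\alpha^{m_n}}{j+k-i+3+2y_{m_n}}\Bigr).
\]
For the induction to go through, the new upper-bound sequence must again have consecutive ratios at most $\alpha$. This holds because the prefactor is increasing in $j$ and $r^+_{j}/r^+_{j+1}\le\alpha$, which is the same observation as in the proof of Lemma~\ref{lemma:improved-m-steps}. Using $1-x\le e^{-x}$ and comparing the harmonic sum with an integral, the product is at most
\[
\Bigl(\frac{j+3+2y}{j+\ell+3+2y}\Bigr)^{\alpha^{m_n}}\le\Bigl(\frac{j+1+n\varepsilon_n}{j+\ell+n\varepsilon_n}\Bigr)^{e^{-\varepsilon_n}},
\]
since the ratio is below $1$ and increasing in the additive constant. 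The lower bound is handled the same way. The factors $1-1/\max(1,j+i-1-y)$ telescope to $(j-1-y)/(j+\ell-1-y)$ when $j>1+y$, and the bound is $0$ otherwise, which produces the indicator $\mathbf 1_{j\ge n\varepsilon_n}$ after enlarging $\varepsilon_n$.

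The remaining issue is the intermediate derivatives of order $\ell m_n-q$ with $0<q<m_n$. Write $P^{((\ell-1)m_n+k)}=z^{m_n-k}Q_k(z^{m_n})$ as in the proof of Lemma~\ref{lemma:improved-m-steps}, with $k=m_n-q$. By the interlacing of Lemma~3.1 of \cite{galligo2025dynamics}, each step $Q_k\to Q_{k+1}$ moves the $j$-th root down but keeps it above the $j$-th root of $Q_0'$. Hence the $j$-th root at stage $\ell m_n-q$ lies between the $j$-th root at stage $\ell m_n$ and the $(j+1)$-th root at stage $(\ell-1)m_n$. The lower bound then follows from the stage-$\ell$ estimate. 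For the upper bound we use the stage-$(\ell-1)$ estimate with index $j+1$, namely
\[
\Bigl(\frac{j+2+n\varepsilon_n}{j+\ell+n\varepsilon_n}\Bigr)^{e^{-\varepsilon_n}}r^+_{j+\ell},
\]
which after absorbing one unit into $n\varepsilon_n$ gives the stated $j+1$ in the numerator. Finally, we convert $r^\pm_{j+\ell}$ back to $r^{(n)}_{j+\ell}$ at the cost of the factor $e^{\pm\varepsilon_n}$.

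The main obstacle is the bookkeeping in this last step: the offset in the intermediate-stage upper bound, and checking that a single sequence $\varepsilon_n$, depending only on $(m_n)$, absorbs all the constants uniformly in $\ell$, $j$ and $q$.
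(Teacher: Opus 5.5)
Your proposal is correct and follows essentially the paper's proof: you use the same majorant and minorant sequences $r_j^{(n)}\gamma_n^{j}$ and $r_j^{(n)}\gamma_n^{j-n}$ with $\gamma_n=\exp(\phi(n)\log n/(nm_n))$, iterate Lemma~\ref{lemma:improved-m-steps} $\ell$ times followed by $1-x\le e^{-x}$, integral comparison and telescoping, and handle $q>0$ with the interlacing sandwich $r_j^{(n,m_n,\ell)}\le s_j\le r_{j+1}^{(n,m_n,\ell-1)}$. Two slips are harmless: the $j$-th root of $Q_k$ stays \emph{below} (not above) the $j$-th root of $Q_0'$, which is exactly what your sandwich needs, and the lower-bound product actually telescopes to $(j-2-y_{m_n})/(j+\ell-2-y_{m_n})$ (the first factor involves $j-1-y_{m_n}$), which is still absorbed by $n\varepsilon_n\ge 2y_{m_n}+3$.
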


\begin{proof}

Let us first assume $q = 0$. 
   Iterating Lemma \ref{lemma:improved-m-steps}, 
we deduce that for $1 \leq \ell \leq n-1$, 
$$P_{n,m_n}^{(\ell m_n)} (z)
= \frac{(nm_n)!}{((n-\ell)m_n)!} 
\prod_{j=1}^{n-\ell} (z^{m_n} -  (r_j^{(n, m_n, \ell)} )^{m_n} )$$
where for $1 \leq j \leq n- \ell$,
$$r_j^{(n, m_n,\ell)} 
\leq r_{j+\ell} \prod_{s = 1}^{\ell} 
\left(1 - \frac{\alpha^{m_n}}{ j+s+2 + 2y_{m_n}} \right) 
$$
as soon as $r_j^{(n)} \leq r_j$ for $1 \leq j \leq n$. 
Notice that iteration of Lemma \ref{lemma:improved-m-steps}
is possible because the ratio between the upper bounds obtained for indices $j$ and $j+1$ always remains smaller than or equal to $\alpha$, since the multiplicative factors 
$1 -  \alpha^{m_n}/(j+s+2 + 2y_{m_n}) $ are increasing with respect to $j$. 

We deduce 
$$r_j^{(n,m_n,\ell)} \leq r_{j+\ell} \exp \left( - \alpha^{m_n} \sum_{s = 1}^{\ell} 
 \frac{1}{ j+s+2 + 2y_{m_n}} \right) 
 \leq r_{j+\ell} \exp \left( - \alpha^{m_n} \int_{1}^{\ell+1} 
 \frac{dx}{ j+x+2 + 2y_{m_n}} \right)$$
 and 
 $$ r_j^{(n,m_n,\ell)} \leq r_{j+\ell} \left( \frac{ j + 3  + 2 y_{m_n}}{j +\ell + 3 + 2 y_{m_n}} \right)^{\alpha^{m_n}} $$
For $\gamma_n > 1$, we can apply this 
result to $r_j = r_j^{(n)} \gamma_n^j$, 
in which case we can take $\alpha = \gamma_n^{-1}$, and then 
$$y_{m_n} =  \frac{2 + \log_- (m_n \log \gamma_n)}{m_n \log \gamma_n} =: h(\gamma_n, m_n)  ,$$
which implies 
$$ r_j^{(n,m_n,\ell)} \leq r^{(n)}_{j+\ell} \gamma_n^{j+\ell} \left( \frac{ j + 3  + 2 h(\gamma_n, m_n)}{j +\ell + 3 + 2 h(\gamma_n, m_n)} \right)^{\gamma_n^{-m_n}}.$$
We now choose, for $n \geq 2$, 
$$\gamma_n = \exp \left( \frac{\phi(n) \log n}{n m_n} \right)$$
where $(\phi (n))_{n \geq 2}$ is a sequence of positive integers going to infinity sufficiently slowly, in such a way that $\gamma_n^n$ and $\gamma_n^{m_n}$ both tend to $1$ when $n \rightarrow \infty$. We can take, for example,
$$\phi(n) = 1 + \lfloor \sqrt{\min(n,m_n)/\log n} \rfloor,$$
which tends to infinity since $m_n/\log n \rightarrow \infty$ by assumption. In particular, we may assume that the sequence $(\phi(n))_{n \geq 2}$ depends only on the sequence $(m_n)_{n \geq 1}$.

We have $$m_n \log \gamma_n = \frac{\phi (n) \log n}{n} $$
and then 
$$h(\gamma_n, m_n) = \frac{2 + (\log n - \log \phi(n) - \log \log n)_+}{(\phi(n) \log n)/n} \leq \frac{2 - \log \log 2 + \log n}{(\phi(n) \log n)/n}
\leq \frac{5 n}{\phi(n)}$$
 since $2 - \log \log 2 \leq 4 \log 2 \leq 4 \log n$. 
We deduce
$$ r_j^{(n,m_n,\ell)} \leq r^{(n)}_{j+\ell} \gamma_n^{n} \left( \frac{ j + 3  + 10 n/\phi(n)}{j +\ell + 3 + 10 n/\phi(n)} \right)^{\gamma_n^{-m_n}}.$$
 Taking 
$$\varepsilon_n =  \max(\log (\gamma_n^n), \log (\gamma_n^{m_n}), 
3/n + 10/\phi(n)),$$
we get 
\begin{equation} r_j^{(n,m_n,\ell)} \leq r_{j+\ell}^{(n)} e^{\varepsilon_n}  \Bigl( \frac{j+ n \varepsilon_n}{j+\ell+ n \varepsilon_n } \Bigr)^{e^{- \varepsilon_n}} \label{xx1234}
\end{equation}
 which gives the upper bound of the lemma when $q = 0$. 
For the lower bound, we proceed as follows. If $r_j^{(n)} \geq r_j$
for $1 \leq j \leq n$, we get 
$$r_j^{(n,m_n,  \ell)} 
\geq r_{j+\ell} \prod_{s = 1}^{\ell} 
\left( 1 - \frac{1}{ \max \left(1,  j +s - 2 -  y_{m_n}   \right)} \right)$$
for $1 \leq j \leq n - \ell$. 
For $\gamma_n > 1$, we apply this result to $r_j = r_j^{(n)} \gamma_n^{j-n}$, which again allows to take $\alpha =  \gamma_n^{-1}$ and then $y_{m_n} = h(\gamma_n, m_n)$, which gives 
$$r_j^{(n,m_n,  \ell)} 
\geq r_{j+\ell} \prod_{s = 1}^{\ell} 
\left( 1 - \frac{1}{ \max \left(1,  j +s - 2 -  h(\gamma_n, m_n)   \right)} \right).$$
If $j \leq 2 + h(\gamma_n, m_n)$, the factor $s =1$ of the product vanishes, and then the lower bound is trivial. 
If $j \geq 2 + h (\gamma_n, m_n)$, we can discard the maximum with $1$, and then we get a telescopic product. Combining the two cases gives the bound
$$r_j^{(n,m_n,  \ell)} 
\geq r_{j+\ell} \, \frac{j- 2 - h(\gamma_n, m_n)}{j + \ell - 2 - h (\gamma_n, m_n)} \mathbf{1}_{j \geq 2 + h (\gamma_n, m_n)}.$$
With the previous choice of $\gamma_n$, we get, for $n \geq 2$, 
$r_{j+ \ell} \geq r_{j+\ell}^{(n)} \gamma_n^{-n} $
and $h(\gamma_n, m_n) \leq 5n /\phi(n)$, which implies 
$$r_j^{(n,m_n,  \ell)} 
\geq r^{(n)}_{j+\ell} \gamma_n^{-n} \, \frac{j- 2 -5 n /\phi(n)}{j + \ell - 2 - 5n/\phi(n)} \mathbf{1}_{j \geq 2 + 5 n /\phi(n)}.$$
Since the previous choice of $\varepsilon_n$ satisfies
$$\varepsilon_n \geq \max( \log (\gamma_n^n), 2/n + 5 /\phi(n)),$$
we get the lower bound claimed in the lemma, for $q = 0$.  

In the general case $0 \leq q \leq m_n -1$, we can write, for $1 \leq \ell \leq n-1$,
$$P_{n,m_n}^{(\ell m_n - q)} (z) 
= \frac{(nm_n)!}{((n-\ell)m_n + q)!}
z^q \prod_{j=1}^{n-\ell}
(z^{m_n} - (r_j^{(n,m_n,\ell-q/m_n)})^{m_n}),
$$
where, from the bound given by Lemma
\ref{impr_upper_bnd}, 
$$r_j^{(n,m_n, \ell)} \leq 
r_j^{(n,m_n,\ell-q/m_n)} \leq r_{j+1}^{(n,m_n,\ell-1)},$$
where $r_{j+1}^{(n,m_n,0)} = r_{j+1}^{(n)}$ in the case $\ell = 1$.
The lower bound of the lemma is then immediately deduced from the case $q = 0$. If $\ell \geq 2$, the upper bound is deduced from \eqref{xx1234} after replacing $j$ by $j+1$ and $\ell$ by $\ell -1$. If $\ell =1$, the upper bound of the lemma is trivial.

\end{proof}

\section{Statement and Proof of the main theorem}\label{sec:mainthm}
\begin{theorem}\label{thm:41improJ}
Let $ \nu_0 $ be a probability measure on $ \mathbb{R}_+ $ with compact support contained in 
$[0,A]$ for some $ A>0 $.  
For each $ n\ge 1 $ let $ (r_j^{(n)})_{1\le j\le n} $ be an increasing sequence in 
$ \mathbb{R}_+ $ such that
\begin{align*}
\frac{1}{n}\sum_{j=1}^n\delta_{r_j^{(n)}}\;\xrightarrow[n\to\infty]{\text{weakly}}\;\nu_0 .
\end{align*}
Let $ (m_n)_{n\ge1} $ be a sequence of positive integers satisfying
\begin{align*}
\frac{m_n}{\log n}\;\xrightarrow[n\to\infty]{}\; \infty .
\end{align*}

Define the polynomial of degree $N_n=n m_n$
\begin{align*}
P_{n,m_n}(z)=\prod_{j=1}^n\bigl(z^{m_n}-(r_j^{(n)})^{m_n}\bigr)
           =\prod_{j=1}^n\prod_{k=0}^{m_n-1}\!\Bigl(z-r_j^{(n)}e^{2\pi i k/m_n}\Bigr).
\end{align*}

For a fixed $t\in(0,1)$ consider the $\lfloor n m_n t\rfloor$-th derivative  
$ P_{n,m_n}^{(\lfloor n m_n t\rfloor)} $.

Then the empirical measure of its roots converges weakly to
\begin{align*}
\mu_t=\nu_t\otimes\unif,
\end{align*}
where
\begin{itemize}
\item $\unif$ is the uniform probability measure on the unit circle,
\item $\nu_t$ is the probability distribution of the random variable
  \begin{align*}
  \Bigl(1-\frac{t}{V_t}\Bigr)q_{\nu_0}(V_t),
  \end{align*}
  with $ V_t\sim\mathrm{Uniform}[t,1] $ and $ q_{\nu_0} $ the quantile function 
  of $\nu_0$, given by 
  $$q_{\nu_0} (\alpha) = \inf \{ y \geq 0, \nu_0 ([0,y]) \geq \alpha \}.$$
\end{itemize}

Moreover, the quantile function of the sub-probability measure $(1-t)\nu_t$ 
(which has total mass $1-t$) satisfies
\begin{equation*}
q_{(1-t)\nu_t}(x)=\frac{x}{x+t}\,q_{\nu_0}(x+t),\qquad 0\le x\le 1-t.
\end{equation*}

\end{theorem}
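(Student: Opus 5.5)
The proof applies Lemma~\ref{lem1.9} with $\ell$ and $q$ chosen so that $\ell m_n - q = \lfloor n m_n t\rfloor$, namely $\ell = \lceil \lfloor n m_n t\rfloor / m_n\rceil$ and $0\le q\le m_n-1$. Then $\ell/n \to t$, and for large $n$ we have $1\le \ell\le n-1$ because $t\in(0,1)$. The roots of the derivative are then of three kinds: $0$ with multiplicity $q\le m_n-1$; the points $s_j e^{2\pi i k/m_n}$ for $1\le j\le n-\ell$; and nothing else. The total number of roots is $N_n-\lfloor nm_nt\rfloor\sim(1-t)nm_n$. Since $q/(nm_n)\le 1/n\to 0$, the atom at zero is negligible. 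Moreover $m_n\to\infty$, so for any continuous bounded test function $f$ on $\mathbb{C}$ the average over $k$ of $f(s_je^{2\pi ik/m_n})$ is uniformly close to $\int f(s_j e^{i\theta})\,d\theta/2\pi$. Uniformity holds because all $s_j\le A e^{\varepsilon_n}$ stay in a compact set, where $f$ is uniformly continuous. It therefore suffices to show that the radial empirical measure $\frac{1}{n-\ell}\sum_{j=1}^{n-\ell}\delta_{s_j}$ converges weakly to $\nu_t$.

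For the radial part, set $x=j/n\in(0,1-t]$. Since $r^{(n)}_{j+\ell}=q_n((j+\ell)/n)$, where $q_n$ is the quantile function of the empirical measure $\frac1n\sum\delta_{r_j^{(n)}}$, and $\ell/n\to t$, $\varepsilon_n\to0$, the two bounds of Lemma~\ref{lem1.9} both behave like $\frac{x}{x+t}\,q_{\nu_0}(x+t)$.

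In detail, the lower bound is $r^{(n)}_{j+\ell}e^{-\varepsilon_n}\frac{x-\varepsilon_n}{x+\ell/n-\varepsilon_n}\mathbf 1_{x\ge\varepsilon_n}$. The upper bound is $r^{(n)}_{j+\ell}e^{\varepsilon_n}\bigl(\frac{x+1/n+\varepsilon_n}{x+\ell/n+\varepsilon_n}\bigr)^{e^{-\varepsilon_n}}$. Both multiplicative factors converge uniformly in $x\in[0,1-t]$ to $x/(x+t)$: the exponent tends to $1$ and the base is bounded away from $0$ for $x\ge\delta$, while near $x=0$ both factors are small. This is where the specific form of Lemma~\ref{lem1.9} matters. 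Since the shift $\varepsilon_n$ enters inside the ratio rather than as a separate factor, the errors stay $o(1)$ uniformly.

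Weak convergence of the empirical measures to $\nu_0$ gives $q_n(u)\to q_{\nu_0}(u)$ at every continuity point of $q_{\nu_0}$, which means for almost every $u$. Monotonicity handles the index shift: $q_n((j+\ell)/n)$ is evaluated at a point within $O(1/n)$ of $x+\ell/n$, and $\ell/n\to t$. Everything is bounded by $Ae^{\varepsilon_n}$. Hence for $V$ uniform on $\{t+1/n,\dots,1\}$ (equivalently $j$ uniform on $\{1,\dots,n-\ell\}$), $s_j$ is sandwiched between two random variables converging almost surely, under a coupling with $V_t\sim\mathrm{Uniform}[t,1]$ with $x=V_t-t$, to $(1-t/V_t)q_{\nu_0}(V_t)$.

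The coupling step is the one to get right. I would realize $j=\lceil (n-\ell)U\rceil$ with $U$ uniform on $[0,1]$, and use the monotonicity of $q_n$ and of the ratio factors in $x$ to pass to limits along continuity points. Bounded convergence then gives convergence of $\mathbb{E}[g(s_j)]$ for bounded continuous $g$, which is weak convergence to $\nu_t$. Combined with the angular averaging, this yields $\mu_t=\nu_t\otimes\unif$.

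Finally, the quantile identity follows directly. The map $v\mapsto (1-t/v)q_{\nu_0}(v)=\frac{v-t}{v}q_{\nu_0}(v)$ is a product of nonnegative nondecreasing functions on $[t,1]$, hence nondecreasing and left-continuous. The mass of $(1-t)\nu_t$ is the law of this map pushed forward from Lebesgue measure on $[t,1]$. Its quantile at level $x\in[0,1-t]$ is therefore the value of the map at $v=x+t$, namely $\frac{x}{x+t}q_{\nu_0}(x+t)$.
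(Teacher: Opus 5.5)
Your argument follows the paper's proof. You write $\lfloor nm_nt\rfloor=\ell m_n-q$ and apply Lemma~\ref{lem1.9}, discard the atom at $0$, and average out the angle using $m_n\to\infty$. You then identify the radial limit by coupling $j$ with a uniform $U$, using monotonicity of the empirical quantile function and convergence at continuity points of $q_{\nu_0}$.

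There are two small differences, both fine. First, you sandwich $s_j$ directly between the lemma's two bounds, viewed as uniformly convergent functions of $x=j/n$. The paper instead first replaces $s_j$ by $r^{(n)}_{j+\ell}(j-1)/(j-1+nt)$ for $j\ge 1+\sqrt n+n\sqrt{\varepsilon_n}$ and controls the L\'evy--Prokhorov distance. Second, you derive the quantile identity, which the paper's proof does not do explicitly.

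One claim is false as stated and needs repair. Nothing in the hypotheses gives $r^{(n)}_j\le A$, so nothing gives $s_j\le Ae^{\varepsilon_n}$. Weak convergence to a measure supported in $[0,A]$ allows, for instance, $r^{(n)}_n=n$. In that case the lower bound of Lemma~\ref{lem1.9} makes $s_{n-\ell}$ of order $n$. So your justification of the angular averaging, that all points lie in a fixed compact set where $f$ is uniformly continuous, does not hold as written.

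The fix is the one the paper uses:
\begin{itemize}
\item Since $\ell\ge1$, the upper bound gives $s_j\le e^{\varepsilon_n}r^{(n)}_{j+\ell}$.
\item By the portmanteau theorem, $\limsup_n \frac1n\#\{i: r^{(n)}_i\ge A+1\}\le\nu_0([A+1,\infty))=0$.
\item Hence only $o(n)$ indices $j$ have $s_j>(A+1)e^{\varepsilon_n}$.
\item On the disc of radius $(A+1)e^{\varepsilon_n}$ your uniform-continuity argument applies.
\item The remaining roots carry a vanishing proportion of the mass, so they contribute at most $2\sup|f|\cdot o(1)$.
\end{itemize}

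In the radial coupling step, the boundedness claim is simply unnecessary. Almost sure convergence of the sandwiching variables plus boundedness of the test function $g$ already gives convergence of $\mathbb{E}[g(s_j)]$.
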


\begin{remark}
The measure $\nu_t$ is the same as the limiting measure found in Theorem 4.1. of \cite{galligo2025dynamics}: it depends only on $\nu_0$ and $t$. 
Therefore, under suitable assumptions detailed in Theorem 4.1. of \cite{galligo2025dynamics}, the same partial differential equations are satisfied for the distribution function $\Psi_t$ and the density $\psi_t$ of $(1-t)\nu_t$, namely
\begin{equation*} 
\frac{\partial\Psi_t(x)}{\partial t}
   =x\,\frac{\frac{\partial\Psi_t(x)}{\partial x}}{\Psi_t(x)}-1,
\end{equation*}
and 
\begin{equation*} 
\frac{\partial\psi}{\partial t}(x,t)
   =\frac{\partial}{\partial x}\!\Bigl(\frac{\psi(x,t)}
          {\,\frac{1}{x}\int_0^x\psi(y,t)\,dy\,}\Bigr).
\end{equation*}
for $t\in[0,1),\;x\in(0,A(1-t))$.

\end{remark}

\begin{proof}[Proof of Theorem \ref{thm:41improJ}]
    Let $t \in (0,1)$ be fixed, we write
\begin{align*}
\lfloor n m_n t \rfloor = \ell m_n - q,
\end{align*}
where $ 0 \leq q \leq m_n - 1 $ and $ \ell \in \mathbb{Z}_{\geq 0} $. Since
\begin{align*}
\ell m_n - q \leq n m_n t < \ell m_n - q + 1,
\end{align*}
we have
\begin{align*}
n t + \frac{q-1}{m_n} < \ell \leq n t + \frac{q}{m_n}.
\end{align*}
Thus, $ \ell = n t + \mathcal{O}(1) $. In particular, for $n \geq 2$ larger than a threshold depending only on $t$, $ 1 \leq \ell \leq n-1 $.

By Lemma \ref{lem1.9}, the empirical measure 
of the roots of $ P_{n,m_n}^{(\lfloor n m_n t\rfloor)} $ can be 
written as 
$$\frac{1}{q + m_n (n- \ell)} 
\left( q \delta_0 +  \sum_{j=1}^{n-\ell} \sum_{k=0}^{m_n-1}
\delta_{s_j e^{2 i \pi k/m_n}} \right)
$$
where 
\[
r_{j+\ell}^{(n)} e^{-\varepsilon_n} \frac{j-n \varepsilon_n}{j+\ell- n \varepsilon_n} \mathbf{1}_{j \geq n \varepsilon_n} \;\le\; s_j \;\le\;  r_{j+\ell}^{(n)} e^{\varepsilon_n}  \Bigl( \frac{j+1 +  n \varepsilon_n}{j+\ell+ n \varepsilon_n } \Bigr)^{e^{- \varepsilon_n}},
\]

In the case where $j \geq 1 + \sqrt{n} + n \sqrt{\varepsilon_n}$, 
we have necessarily $\varepsilon_n < 1$ since $j \leq n$, and
$n \varepsilon_n \leq (j - 1) \sqrt{\varepsilon_n}$, which implies 
\begin{align*} \frac{j-n \varepsilon_n}{j + \ell - n \varepsilon_n} 
 \mathbf{1}_{j \geq n \varepsilon_n} 
& \geq \frac{(j- 1) ( 1- \sqrt{\varepsilon_n})}{j - 1 + nt + \mathcal{O}(1)}
=  \frac{(j- 1) ( 1- \sqrt{\varepsilon_n})}{(j - 1 + nt) (1 + \mathcal{O}(1/(j-1)))}
\\ & =  \frac{(j- 1) ( 1- \sqrt{\varepsilon_n})}{(j - 1 + nt) (1 + \mathcal{O}(1/\sqrt{n}))},
\end{align*}
and then 
$$s_j \geq c_n r_{j+\ell}^{(n)} \frac{j-1}{j-1 + nt}, $$
for a sequence $(c_n)_{n \geq 2}$ in $[0,1)$, tending to $1$ at infinity, and depending only on the sequence $(\varepsilon_n)_{n \geq 2}$. 
Notice that $(\varepsilon_n)_{n \geq 2}$ depends only on $(m_n)_{n \geq 1}$ by Lemma \ref{lem1.9}.  

Similarly, 
\begin{align*}
\frac{j+1 +  n \varepsilon_n}{j+\ell+ n \varepsilon_n }
& \leq \frac{(j-1)( 1+ 2/(j-1) + \sqrt{\varepsilon_n})}{j-1 + nt + \mathcal{O}(1) }
\leq  \frac{(j- 1) ( 1 + 2/\sqrt{n} +  \sqrt{\varepsilon_n})}{(j - 1 + nt) (1 + \mathcal{O}(1/\sqrt{n}))},
\end{align*}
and since $j \leq n- \ell$ and $\varepsilon_n < 1$, 
$$ \left(\frac{j+\ell +  n \varepsilon_n}{j+1+ n \varepsilon_n } \right)^{1 - e^{- \varepsilon_n}}
\leq  \left( \frac{ 2n}{n \varepsilon_n} \right)^{1 - e^{- \varepsilon_n}} \leq (2/\varepsilon_n)^{\varepsilon_n}.
$$
Multiplying these two estimates, we deduce that 
$$s_j \leq C_n r^{(n)}_{j+\ell} \frac{j-1}{j-1 + nt}.$$
where $(C_n)_{n \geq 2}$ is a sequence in $(1, \infty)$ tending to $1$ and depending only on 
$(\varepsilon_n)_{n \geq 2}$. 

If, in the empirical measure of the roots of 
$ P_{n,m_n}^{(\lfloor n m_n t\rfloor)} $, we replace 
$s_j$ by 
$r_{j+\ell}^{(n)} (j-1)/(j-1 + nt)$, all roots corresponding to indices $j \geq 1 + \sqrt{n} + n \sqrt{\varepsilon_n}$ 
such that $r_{j+\ell}^{(n)} \leq A+1$ are moved by at most 
$\max(C_n-1, 1- c_n) (A + 1) $, for $n \geq 2$ larger than a threshold depending only on $t$. 
Since by assumption, 
\begin{align*}
\frac{1}{n}\sum_{j=1}^n\delta_{r_j^{(n)}}\;\xrightarrow[n\to\infty]{\text{weakly}}\;\nu_0 .
\end{align*}
 and $\nu_0$ is supported in $[0,A]$, there 
 are $o(n)$ indices $j$ such that $r_{j+\ell}^{(n)} > A + 1$
 when $n \rightarrow \infty$. Hence, 
 since $ 1+\sqrt{n} + n \sqrt{\varepsilon_n} = o(n)$, the proportion of roots which are moved by 
 more than $\max(C_n-1, 1- c_n) (A + 1) $ when $s_j$ is changed to $r_{j+\ell}^{(n)} (j-1)/(j-1 + nt)$ tends to zero when $n \rightarrow \infty$. 

Since $c_n \rightarrow 0$ when $n \rightarrow \infty$, the L\'evy-Prokhorov distance between the empirical measure of 
the roots of $P_{n,m_n}^{(\lfloor n m_n t \rfloor)}$ and the points obtained from these roots by changing $s_j$ to $(j-1)/(j-1 + nt) r_{j+ \ell}^{(n)}$ tends to zero when $n \rightarrow \infty$. In order to show 
convergence of the empirical measure of the
roots of $P_{n,m_n}^{(\lfloor n m_n t \rfloor)}$, it is then enough 
to show convergence, when $n \rightarrow \infty$, 
of the measure 
$$\frac{1}{q + (n- \ell) m_{n}} \left( q \delta_0 + \sum_{j=1}^{n-\ell} \sum_{k=0}^{m_{n}-1}
\delta_{e^{2 i\pi k/m_n} r_{j+\ell}^{(n)} (j-1)/(j-1 + nt) } \right)
$$
towards $\mu_t$: 
notice that $q + (n-\ell) m_n = n m_n - \lfloor n m_n t\rfloor$ is the number of 
roots of $P_{n, m_n}^{(\lfloor n m_n t \rfloor)}$. 

We can rotate the measure by an 
angle between $0$ and $2 \pi /m_n$, keeping a L\'evy-Prokhorov distance tending to zero, since we move a proportion tending to one of the points 
by $\mathcal{O} ((A+1) /m_n)$. Averaging 
among the possible angles, it is enough to show the 
convergence 
$$\frac{1}{q + (n- \ell) m_{n}}
\left(q \delta_0 + m_n \sum_{j=1}^{n-\ell}
\delta_{ r_{j+\ell}^{(n)} (j-1)/(j-1 + nt) }
\right) \otimes unif
\underset{n \rightarrow \infty}{\longrightarrow } \mu_t = \nu_t \otimes unif$$
i.e. 
$$\frac{1}{q + (n- \ell) m_{n}}
\left(q \delta_0 + m_n\sum_{j=1}^{n-\ell}
\delta_{ r_{j+\ell}^{(n)} (j-1)/(j-1 + nt) }
\right) 
\underset{n \rightarrow \infty}{\longrightarrow }  \nu_t.$$
By moving a negligible part of the measure, one deduces that it is enough to prove 
$$\frac{1}{n - \ell} 
\sum_{j=1}^{n-\ell} \delta_{ r_{j+\ell}^{(n)} (j-1)/(j-1 + nt) }\underset{n \rightarrow \infty}{\longrightarrow }  \nu_t.$$
The left-hand side is the distribution of 
$$r^{(n)}_{\ell + 1 + \lfloor (n-\ell) U \rfloor}
\frac{\lfloor (n-\ell) U \rfloor}{\lfloor (n-\ell) U \rfloor + nt}
= q_{\nu^{(n)}} \left( \frac{ \ell + 1 + \lfloor (n-\ell) U \rfloor}{n} \right)
\frac{\lfloor (n-\ell) U \rfloor}{\lfloor (n-\ell) U \rfloor + nt}
$$
where $U$ is uniformly distributed on $[0,1]$, 
and $\nu^{(n)}$ is the empirical distribution 
of $(r^{(n)}_j)_{1 \leq j \leq n}$, i.e. 
$$\nu^{(n)}  = \frac{1}{n} \sum_{j=1}^{n} \delta_{r^{(n)}_j}. $$
Since $nt - 1 \leq \ell \leq nt + 1$, 
we get for $n > 3/t$, and then $3/n < t$, 
$$ q_{\nu^{(n)}} \left(   t + (1-t)U - 3/n   \right)
 \leq q_{\nu^{(n)}} \left( \frac{ \ell + 1 + \lfloor (n-\ell) U \rfloor}{n} \right)
\leq q_{\nu^{(n)}} \left( \min(1, t + (1-t)U + 3/n)   \right). $$
Now, using L\'evy-Prokhorov distance 
and convergence of $(\nu^{(n)})_{n \geq 1}$
towards $\nu_0$, we deduce that 
for fixed $\alpha \in [0,1]$, $\varepsilon > 0$, 
and for $n$ large enough, 
$$ \nu^{(n)} ([0, q_{\nu_0} (\alpha) + \varepsilon]) + \varepsilon \geq \nu_0 ([0, q_{\nu_0} (\alpha)]) \geq \alpha, \;
\nu_0 ([0, q_{\nu^{(n)}} (\alpha) + \varepsilon]) + \varepsilon \geq \nu^{(n)} ([0, q_{\nu^{(n)}} (\alpha)]) \geq \alpha,
$$
and then 
$$ q_{\nu^{(n)}} (\max(0,\alpha - \varepsilon))
\leq q_{\nu_0} (\alpha) + \varepsilon, \; 
q_{\nu^{(n)}}(\alpha) \geq   q_{\nu_0} (\max(0,\alpha - \varepsilon)) - \varepsilon.
$$
Hence, for all $\varepsilon \in (0,t)$, 
$$\underset{n \rightarrow \infty}{\lim \inf} \, 
q_{\nu^{(n)}} \left( t + (1-t)U - 3/n   \right)
\geq \underset{n \rightarrow \infty}{\lim \inf} \, 
q_{\nu^{(n)}} \left( t + (1-t)U - \varepsilon/2  \right)
\geq   
q_{\nu_0}  \left( t + (1-t)U - \varepsilon \right)
- \varepsilon/2,
$$
and then, letting $\varepsilon \rightarrow 0$, 
$$\underset{n \rightarrow \infty}{\lim \inf} \, 
q_{\nu^{(n)}} \left( t + (1-t)U - 3/n   \right)
\geq q_{\nu_0}  \left( ( t + (1-t)U)-  \right)
$$
where $q_{\nu_0}(\alpha-)$ is the limit of $q_{\nu_0} (\beta)$ when $\beta$ tends to $\alpha$ from below. 
Similarly, if $t + (1-t)U \leq 1 - \varepsilon$, 
we get 
\begin{align*} &  \underset{n \rightarrow \infty}{\lim \sup} \, q_{\nu^{(n)}} \left( \min(1, t + (1-t)U + 3/n)   \right) \\ & \leq \underset{n \rightarrow \infty}{\lim \sup} \, q_{\nu^{(n)}} \left(   t + (1-t)U + \varepsilon/2   \right) \leq q_{\nu_0}  \left( t + (1-t)U + \varepsilon \right)
+ \varepsilon/2,
\end{align*}
and then, letting $\varepsilon \rightarrow 0$, 
we deduce that for $U <1$, and then almost surely, 
$$ \underset{n \rightarrow \infty}{\lim \sup} \, q_{\nu^{(n)}} \left( \min(1, t + (1-t)U + 3/n)   \right) \leq q_{\nu_0} \left( ( t + (1-t)U)+  \right),$$
where $q_{\nu_0}(\alpha+)$ is the limit of $q_{\nu_0} (\beta)$ when $\beta$ tends to $\alpha$ from above. 
Now, $q_{\nu_0}$ is nondecreasing and then has 
at most countably many discontinuities, which implies that almost surely, 
$$  q_{\nu_0} \left( ( t + (1-t)U)+  \right)
=  q_{\nu_0} \left( ( t + (1-t)U)-  \right)
=  q_{\nu_0} \left(  t + (1-t)U  \right).$$
Hence, almost surely,
\begin{align*} 
 q_{\nu_0} & \left(  t + (1-t)U  \right)
  =  q_{\nu_0} \left( ( t + (1-t)U)-  \right)
 \leq \underset{n \rightarrow \infty}{\lim \inf} \, 
q_{\nu^{(n)}} \left( t + (1-t)U - 3/n   \right)
\\ & \leq \underset{n \rightarrow \infty}{\lim \inf} \, q_{\nu^{(n)}} \left( \frac{ \ell + 1 + \lfloor (n-\ell) U \rfloor}{n} \right)
  \leq \underset{n \rightarrow \infty}{\lim \sup} \, q_{\nu^{(n)}} \left( \frac{ \ell + 1 + \lfloor (n-\ell) U \rfloor}{n} \right)
\\ & \leq \underset{n \rightarrow \infty}{\lim \sup} \,
q_{\nu^{(n)}} \left( \min(1, t + (1-t)U + 3/n)   \right)   \leq q_{\nu_0} \left( ( t + (1-t)U)+  \right) 
=  q_{\nu_0} \left(  t + (1-t)U  \right).
\end{align*}
Since $\ell = nt + \mathcal{O}(1)$, we also have 
$$\frac{\lfloor (n-\ell) U \rfloor}{\lfloor (n-\ell) U \rfloor + nt} \underset{n \rightarrow 
\infty}{\longrightarrow}  \frac{ (1-t)U}{t + (1-t)U},$$ 
and then 
$$ q_{\nu^{(n)}} \left( \frac{ \ell + 1 + \lfloor (n-\ell) U \rfloor}{n} \right) \frac{\lfloor (n-\ell) U \rfloor}{\lfloor (n-\ell) U \rfloor + nt}
\underset{n \rightarrow 
\infty}{\longrightarrow} 
 q_{\nu_0} \left(  t + (1-t)U  \right) \frac{ (1-t)U}{t + (1-t)U}
$$
almost surely. Letting 
$V_t := t + (1-t) U$, which is uniformly distributed 
on $[t,1]$, 
this proves convergence of 
the empirical measure of the roots of the $\lfloor n m_n t \rfloor$-th derivative of  
$P_{n,m_n}$ towards $\nu_t \otimes unif$.

\end{proof}

\section{Concluding Remarks}\label{sec:conclude}
Our main result establishes that the convergence of the empirical root measure to the predicted limit~$\mu_t$ holds under the condition $m_n/\log n \to \infty$, significantly improving the previous requirement $m_n/(n\log n) \to \infty$ from~\cite{galligo2025dynamics}. This fully resolves the conjecture stated therein regarding the robustness of the deterministic structured sampling scheme.

Several natural questions arise regarding further extensions.

\medskip
\noindent\textbf{The regime $m_n \to \infty$ with no rate.}
Our proof requires $m_n/\log n \to \infty$ primarily to ensure that the regularization parameter satisfies $\gamma_n^n \to 1$. It is natural to expect that the same limiting dynamics holds as soon as $m_n \to \infty$, regardless of the rate. Establishing this would likely require a fundamentally different approach to controlling error propagation, perhaps through probabilistic or potential-theoretic arguments that bypass multiplicative regularization entirely.

\medskip
\noindent\textbf{The regime $m_n = m$ fixed.}
When $m_n = m$ is a fixed constant independent of~$n$, the situation is qualitatively different, and the same limiting dynamics should not be expected to hold in general. For $m = 1$ or $m = 2$, the polynomial $P_{n,m}(z) = \prod_{j=1}^n (z - r_j^{(n)})$ has exclusively real roots, placing the problem in the setting of real-rooted polynomials where the Steinerberger PDE on the real line governs the dynamics---a fundamentally different equation from the radial PDE studied here. 

More generally, for fixed $m \geq 3$, the polynomial $P_{n,m}$ places $m$ roots on each of $n$ concentric circles. After $\lfloor nmt \rfloor$ differentiations, one expects a limiting measure that depends not only on~$\nu_0$ and~$t$, but also on~$m$ itself. The key observation is that for fixed~$m$, the discrete rotational symmetry of order~$m$ is never well approximated by full rotational invariance: the interaction between roots on the same circle, separated by an angular distance of~$2\pi/m$, cannot be neglected at leading order. It would be interesting to characterize the limiting dynamics in this regime. One expects a family of evolution equations or quantile relations parametrized by~$m$, interpolating between the real-line dynamics (small~$m$) and the rotationally invariant dynamics ($m \to \infty$). Even the explicit identification of the limiting measure for specific values of~$m$ and simple choices of~$\nu_0$ appears to be a nontrivial open problem.

\medskip
\noindent\textbf{Other directions.}
The technique developed in Section~\ref{sec:sing-Step}, which provides sharper control of rational sums evaluated near their zeros, may find applications beyond the rotationally invariant setting---for instance, in the analysis of deterministic polynomial configurations with other discrete symmetries, or in the study of spectral dynamics for structured random matrices under analogous operations.

\bibliographystyle{abbrv}
\bibliography{RP3}
\end{document}